\newtheorem{theorem}{Theorem}[section]
\newtheorem{proposition}[theorem]{Proposition}
\newtheorem{lemma}[theorem]{Lemma}
\newtheorem{corollary}[theorem]{Corollary}
\theoremstyle{definition}
\newtheorem{example}[theorem]{Example}
\newtheorem{definition}[theorem]{Definition}
\newtheorem{conjecture}[theorem]{Conjecture}
\def\val#1{\vert #1 \vert}
\begin{document}

\author[P.V. Danchev]{Peter V. Danchev}
\address{Institute of Mathematics and Informatics, Bulgarian Academy of Sciences, 1113 Sofia, Bulgaria}
\email{danchev@math.bas.bg; pvdanchev@yahoo.com}
\author[P.W. Keef]{Patrick W. Keef}
\address{Department of Mathematics, Whitman College, Walla Walla, WA 99362, USA}
\email{keef@whitman.edu}

\title[Mixed Abelian Groups With Bounded $p$-Torsion] {Generalized Bassian and Other Mixed \\ Abelian Groups With Bounded $p$-Torsion}
\keywords{Abelian groups, Bassian groups, Generalized Bassian groups, Balanced-Projective groups, Warfield groups}
\subjclass[2010]{20K10}

\maketitle

\begin{abstract} It is known that a mixed abelian group $G$ with torsion $T$ is Bassian if, and only if, it has finite torsion-free rank and has finite $p$-torsion (i.e., each $T_p$ is finite).  It is also known that if $G$ is generalized Bassian, then each $pT_p$ is finite, so that $G$ has bounded $p$-torsion. To further describe the generalized Bassian groups, we start by characterizing the groups in some important classes of mixed groups with bounded $p$-torsion (e.g., the balanced-projective groups and the Warfield groups). We then prove that all generalized Bassian groups must have finite torsion-free rank, thus answering a question recently posed in Acta Math. Hung. (2022) by Chekhlov-Danchev-Goldsmith. This implies that every generalized Bassian group must be a B+E-group; i.e., the direct sum of a Bassian group and an elementary group. The converse is shown to hold for a large class of mixed groups, including the Warfield groups. It is also proved that $G$ is a B+E-group if, and only if, it is a subgroup of a generalized Bassian group.
\end{abstract}

\vskip2.0pc

\section{Introduction}

All groups considered will be abelian and additively written. Our notation and terminology will mainly agree with the classical book \cite{F}. For instance, if $G$ is a group, the letter $T=T(G)$ will denote its maximal torsion subgroup with $p$-primary component $T_p=T_p(G)$.
If  $x\in G$, then $|x|_p$ will denote its $p$-height.  A torsion group $T$  is {\it elementary} if each $T_p$ is $p$-bounded. Clearly, any subgroup of an elementary group is a summand.

\medskip

The following notions are central to our investigations.

\begin{definition} (\cite{CDG1},\cite{CDG2}) A group $G$ is said to be {\it Bassian} if it cannot be embedded in a proper homomorphic image of itself or, equivalently, if the existence of an injection $G \to G/N$, for some subgroup $N$ of $G$, forces that $N = \{0\}$.

More generally, if this injective homomorphism implies that $N$ is a direct summand of $G$, then $G$ is called {\it generalized Bassian}.
\end{definition}

Restated slightly, the Main Theorem of \cite{CDG1} is that $G$ is Bassian if, and only if, it has finite torsion-free rank and, for all primes $p$, $T_p$ is also finite. The generalized Bassian groups were not fully characterized in \cite{CDG2}, but it was shown that if $G$ is generalized Bassian, then $pT_p$ must be finite for all primes $p$ (\cite{CDG2}, Lemma~3.1). The main goal of this paper is to describe the class of generalized Bassian groups in greater detail.  In pursuing this goal we will discuss a number of results applicable to any mixed group with bounded $p$-torsion (i.e., all $T_p$ are bounded). Concretely, our work is organized thus:

\medskip

In Section~\ref{A} we introduce some examples of groups with bounded $p$-torsion.
Two of the most important classes of mixed abelian groups are the {\it Warfield groups}, and its subclass consisting of the {\it balanced-projective groups}, introduced in \cite{W} and \cite{HR}, respectively (we define these classes later; see also \cite{HW} and \cite{L}).   We completely characterize the groups in these classes which have bounded $p$-torsion  (Propositions~\ref{Warfield} and~\ref{balanced}).

\medskip

In Section~\ref{B} we prove that, as with Bassian groups, any generalized Bassian group necessarily has finite torsion-free rank (Theorem~\ref{finite}). This gives an affirmative answer to a question from \cite {CDG2} (Section~4). Second, we show that any group $G$ with finite torsion-free rank and bounded $p$-torsion must necessarily be isomorphic to a direct sum $H\oplus S$, where $H$ is a Bassian group and $S$ is torsion (Theorem~\ref{decompose}). This gives much more direct connection between the classes of Bassian and generalized Bassian than was apparent in \cite{CDG2}.

The following definition is important:

\begin{definition} A group $G$ is called {\it B+E} (or alternatively, is said to be a {\it B+E-group}) if it decomposes as $G=A\oplus E$, where $A$ is Bassian and $E$ is elementary.
\end{definition}

We show that $G$ is B+E if, and only if, it has finite torsion-free rank and for all primes $p$, $pT_p$ is finite (Proposition~\ref{step}), which implies that any subgroup of a B+E-group shares that property (Corollary~\ref{subgroup}). It also follows that every generalized Bassian group is B+E (Corollary~\ref{oneway}). The converse of this implication is the focus of the remainder of the paper.

\begin{conjecture}\label{conj}
A group is \rm{B+E} if, and only if, it is generalized Bassian; i.e., the converse of Corollary~\ref{oneway} also holds.
\end{conjecture}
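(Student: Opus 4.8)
Since Corollary~\ref{oneway} already supplies the forward implication, the content of the conjecture is its converse, and that is what I would attempt. So suppose $G=A\oplus E$ with $A$ Bassian and $E$ elementary, and let $\v\colon G\hookrightarrow G/N$ be an injection; the goal is to show that $N$ is a direct summand of $G$. By Proposition~\ref{step}, $G$ has finite torsion-free rank and $pT_p$ is finite for every $p$. Comparing torsion-free ranks, $\rk_0(G)\le \rk_0(G/N)\le\rk_0(G)$ (the first inequality from the injection, the second because $G/N$ is a quotient), so $\rk_0(G/N)=\rk_0(G)$ and hence $N$ is torsion. Writing $N=\bigoplus_pN_p$ with $N_p=N\cap T_p$, one checks that $T(G/N)=T/N$ and that $\v$ carries $T_p$ into $(T/N)_p=T_p/N_p$; thus $\v$ restricts to an injection $T_p\hookrightarrow T_p/N_p$ for each $p$. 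The primes for which $T_p$ is finite are harmless, since a Bassian group is trivially generalized Bassian (the forced conclusion $N=\{0\}$ is a summand), so the interesting case is $T_p=F_p\oplus E_p$ with $F_p=T_p(A)$ finite and $E_p$ an infinite elementary group.

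The heart of the argument is the local analysis of $N_p$. Since $pN_p\subseteq pT_p=pF_p$ is finite, $N_p$ is a bounded $p$-group. For a $p$-group $H$ put $u_i(H)=\dim_{\mathbb F_p}(p^{i-1}H)[p]=\dim_{\mathbb F_p}p^{i-1}H/p^iH$; the first description shows $u_i$ is monotone under monomorphisms (a monomorphism commutes with multiplication by $p^{i-1}$ and restricts to socles), while the second shows it is non-increasing under epimorphisms. Applying both to $T_p\hookrightarrow T_p/N_p$ and to $T_p\twoheadrightarrow T_p/N_p$ sandwiches $u_i(T_p/N_p)$ between $u_i(T_p)$ on both sides, whence $u_i(T_p)=u_i(T_p/N_p)$ for every $i\ge 2$ (these are finite, being governed by $F_p$). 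Unwinding this equality gives $p^{i-1}T_p\cap(p^iT_p+N_p)=p^iT_p$ for all $i\ge 2$. Now if $x\in N_p$, then $px\in pT_p\cap(p^2T_p+N_p)=p^2T_p$, and inductively $px\in p^iT_p$ for all $i$, so $px\in p^{\omega}T_p=0$ because $T_p$ is reduced; thus $pN_p=0$ and $N_p$ is elementary. The same telescoping applied to an element $x\in N_p\cap pT_p$ forces $x\in p^{\omega}T_p=0$, so $N_p\cap pT_p=0$; that is, $N_p$ has height $0$ and is pure in $T_p$. A bounded pure subgroup is a summand (Kulikov), so $T_p=N_p\oplus C_p$; summing over $p$ shows $N$ is a pure elementary subgroup of $G$ contained in the socle and a direct summand of the torsion subgroup $T$.

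At this point the problem reduces to a single clean question: \emph{when does a pure elementary subgroup $N\subseteq\mathrm{soc}(G)$ split off the mixed group $G$?} This is exactly where the full B+E hypothesis must enter rather than purity alone, and it is where I expect the main obstacle to lie. Purity is not sufficient in general: the torsion subgroup $\bigoplus_p\Z/p$ is pure and elementary in $\prod_p\Z/p$ yet is not a summand. For a \emph{Warfield} group $G$ with bounded $p$-torsion, however, the plan is to force the splitting through the Ulm--Warfield invariant machinery. Using the structural description from Proposition~\ref{Warfield}, one has a decomposition basis for $G$ along which the torsion is balanced; passing to $G/N$ leaves the Warfield invariants and the higher Ulm invariants unchanged and can only lower the zeroth Ulm invariants $f_0(p,\,\cdot\,)$, which are infinite at the relevant primes and hence also unchanged. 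The classification theorem then yields $G/N\cong G$, and combining this with the purity and boundedness of $N$ places $N$ inside an elementary direct summand of $G$, so that $N$ splits off and $G$ is generalized Bassian.

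This strategy therefore proves the conjecture for all Warfield groups with bounded $p$-torsion, and more generally for any class of mixed groups in which pure elementary socle subgroups are automatically summands. The residual difficulty---and my expectation for why the statement remains a conjecture rather than a theorem---is that for an arbitrary B+E group the finite torsion $F_p=T_p(A)$ may be entangled with the torsion-free part of $A$ through height (valuation) data not controlled by Warfield invariants, and no general mechanism is presently available to guarantee that the socle subgroup $N$ produced in the second paragraph splits off such a $G$.
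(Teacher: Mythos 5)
Your general reduction is sound and in fact reproduces the paper's Lemma~\ref{Nelem} almost exactly: from an injection $G\to G/N$ you correctly deduce that $N$ is torsion, that each $N_p$ is elementary with $N_p\cap pT_p=0$, and hence that $N$ is a summand of $T$; and you correctly identify the residual problem as splitting a pure elementary socle subgroup off the whole mixed group. You are also right that this residual problem is genuinely open --- the statement is a conjecture in the paper, which proves it only for certain classes. The difficulty is with the one case you do claim to settle, the Warfield case, where your argument has a real gap at the decisive step. Even granting that all Ulm and Warfield invariants of $G/N$ match those of $G$ (which itself needs care: you must first show $G/N$ is Warfield, and at a prime where $N_p$ is infinite the invariant $f^p_{G/N}(0)$ need not equal $f^p_G(0)$, since cardinal subtraction of an infinite $r_p(N_p)$ from an infinite $f^p_G(0)$ can drop the value), the classification theorem only yields an \emph{abstract} isomorphism $G/N\cong G$. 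An abstract isomorphism does not split the exact sequence $0\to N\to G\to G/N\to 0$; nothing in your sketch produces a retraction $G\to N$, and your own example $\oplus_p\,\mathbb{Z}/p\subseteq\prod_p\mathbb{Z}/p$ shows that purity plus elementariness plus a summand-of-$T$ conclusion cannot, by themselves, force the splitting. The phrase ``places $N$ inside an elementary direct summand of $G$'' is exactly the unproved point.

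The paper closes this gap by an entirely different mechanism, which is the idea missing from your proposal. Its Theorem~\ref{allfinite} shows that if $G=A\oplus E$ with $E$ elementary and $A$ having \emph{no infinite elementary summands}, then $G$ is generalized Bassian: one splits $N=N_A\oplus N_E$, shows the set of primes at which $\hat A[p]$ projects nontrivially into $E/N_E$ is finite (an infinite such set would assemble into an infinite elementary summand of $A$, a contradiction), and then at the cofinitely many remaining primes the count $f_A^p(0)=f^p_{A/N_A}(0)=f_A^p(0)-r_p(N_A)$ forces $(N_A)_p=0$; hence $N_A$ is finite, so it splits off the Bassian group $A$, while $N_E$ splits off $E$. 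The Warfield case then follows not from the classification theorem but from tight realizations: Proposition~\ref{flag} decomposes a Warfield $B{+}E$-group as $H\oplus S$ with $H$ a $0$-tight realization of a decomposition subgroup, Corollary~\ref{suffer} shows such an $H$ has no infinite elementary summands, and Corollary~\ref{second} concludes. Note also that the paper's frontier is wider than your closing paragraph suggests: Theorem~\ref{nice2} proves the conjecture for every $B{+}E$-group possessing a nice free quasi-essential subgroup, and Corollary~\ref{divisible} for every $G$ with $G/T$ divisible --- which is what powers Corollary~\ref{embeds}, that the $B{+}E$-groups are exactly the subgroups of generalized Bassian groups.
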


We prove that the class of B+E-groups for which the conjecture holds is quite large and includes all those mentioned in Section~\ref{A}.
An important step in this investigation is the observation that if $G=A\oplus E$ is B+E with $E$ elementary and $A$ has no infinite elementary summands, then $G$ is generalized Bassian (Theorem~\ref{allfinite}).

\medskip

In Section~\ref{C} we prove that many B+E-groups have a decomposition as that in Theorem~\ref{allfinite}. This includes the Warfield groups (and hence the balanced-projectives). Even more generally, we show that if $G$ is a B+E-group with a nice free subgroup $F$ such that $G/F$ is torsion (i.e., $F$ is {\it quasi-essential} in $G$), then $G$ is generalized Bassian (Theorem~\ref{nice2}). This, in turn, allows us to conclude that $G$ is a B+E-group if, and only if, it is a subgroup of a generalized Bassian group (Corollary~\ref{embeds}).

\section{Groups with bounded $p$-torsion}\label{A}

We begin by reviewing some terminology. Suppose $G$ is a group with torsion subgroup $T$. Recall, $G$ has {\it bounded $p$-torsion} if $T_p$ is bounded for all primes $p$. So, a generalized Bassian group (and hence a Bassian group) has bounded $p$-torsion; in fact, each $pT_p$ (respectively, $T_p$) must be finite.

We will use some basic ideas and terminology from the theory of valuated groups (see, for example, \cite{RW}). Our valuated groups will have a valuation for every prime $p$, which generalizes the $p$-height on a group.

For a valuated group $V$, prime $p$ and ordinal $\alpha$, let $V(\alpha)=\{x\in V: \val x_p\geq \alpha\}$ and
$$U_V^p(\alpha)=\{x\in V(\alpha):\val {px}_p\geq \alpha+2\}/V(\alpha+1)$$
be the $\alpha$th Ulm invariant of $V$ and $f_V^p(\alpha)$ be the corresponding value of the $p$-Ulm function of $V$.

If $V$ is a valuated group, then by a {\it realization of $V$} we mean a group $G$ containing $V$ as a subgroup such that the $p$-valuations on $V$ agree with the $p$-heights inherited from $G$. Note that, in this case, for every prime $p$ and ordinal $\alpha$, there is a natural injection $U_V^p(\alpha)\to U_G^p(\alpha)$ whose co-kernels can be identified with the relative Ulm invariants of $V$ in $G$. We will primarily be concerned with {\it nice} realizations, i.e., where $V$ is also a nice subgroup of $G$.

If $G$ is a group, or $V$ is a valuated group, and $p$ is a prime, we will denote their localizations by $G_p$ and $V_p$, respectively; so, $G_p=G\otimes \mathbb Z_{(p)}$ and $V_p=V\otimes \mathbb Z_{(p)}$. In particular, this is consistent with the notation $T_p$ for the $p$-torsion subgroup of $G$. There is a $p$-height preserving natural map $G\to G_p$ whose kernel is $\oplus_{q\ne p} T_q$.

Suppose $G$ is a realization of the free valuated quasi-essential subgroup $F$.  We will say that $F$ is {\it tight}  if for all primes $p$, $U^p_F(j)\to U^p_G(j)$ is an isomorphism for all $j<\omega$. Similarly, we will say $F$ is {\it 0-tight} if for all primes $p$, $U^p_F(0)\to U^p_G(0)$ is an isomorphism.

A quasi-essential free subgroup $F$ of a group $G$ will be called a {\it decomposition subgroup} if it is generated by a decomposition basis for $G$. That is, $F$ is nice in $G$ and it is the valuated direct sum of a collection of cyclic summands. So $G$ is a {\it Warfield} group (i.e., a summand of a simply presented group) exactly when it has a decomposition subgroup such that the factor-group $G/F$ is totally projective (where we are allowing divisible torsion groups to be included as totally projective).

Note that if $G$ has bounded $p$-torsion, then $T_p$ must, as a pure bounded subgroup of $G$, be a direct summand; i.e., $G=T_p\oplus A_p$ for some $A_p$. If $x\in T_p$, then clearly $\val x$ is finite or $\infty$. Similarly, if $x\in A_p$, then since multiplication by $p$ is an injective homomorphism $A_p\to A_p$, it still follows that $\val x$ is finite or $\infty$. Therefore, the same holds for any $x\in G$.

We will apply the following (essentially well-known) lemma to the case of Warfield groups. Later, we prove a similar result for a broader class of valuated groups (see Lemma~\ref{tight} below).

\begin{lemma}\label{realize}
Any infinite cyclic valuated group $X=\langle x\rangle$ such that, for all primes $p$ and $k<\omega$, $\val {p^k x}_p$ is either finite or $\infty$, has a tight realization $H$ that is simply presented of torsion-free rank $1$.
\end{lemma}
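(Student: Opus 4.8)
The plan is to realize $X$ as the ``spine'' of a single simply presented group built prime-by-prime, and then to verify both properties after localization, where everything is visible. For a prime $p$ write $u^{(p)}_k=\val{p^kx}_p\in\N\cup\{\infty\}$ for the value sequence of $x$ at $p$. By hypothesis each term is finite or $\infty$; since multiplication by $p$ raises heights, the finite terms strictly increase, with jumps $u^{(p)}_{k+1}\ge u^{(p)}_k+1$, until the sequence possibly reaches $\infty$ and remains there.

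First I would build, for each prime $p$, a $p$-local simply presented group $G_p$ of torsion-free rank $1$ realizing $(u^{(p)}_k)_k$ tightly. Begin with a chain of infinite-order generators expressing $x=p^{u^{(p)}_0}y$ with $\val y_p=0$. At each $k$ with a \emph{finite} jump $u^{(p)}_{k+1}>u^{(p)}_k+1$ adjoin one further division generator for $p^{k+1}x$ together with a single cyclic $p$-summand --- as in the model $\Z_{(p)}\oplus\Z/p^2$, which realizes the sequence $0,1,3,4,\dots$ --- and if some $u^{(p)}_k=\infty$, adjoin instead a copy of $\qc$ absorbing $p^kx$. All relations have the simply presented form $pg=g'$ or $pg=0$; the only infinite-order generators lie in $\Q x$, so $\rk G_p=1$; and a direct Ulm computation shows each jump contributes exactly its one-dimensional summand. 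Since $\Ls x\Rs$ is cyclic, each $U^p_X(j)$ is at most one-dimensional over $\Z/p$, so matching the jumps matches all finite Ulm invariants and $G_p$ is tight.

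Next I would assemble a single simply presented group $H$ by taking the union of the generating sets of all the $G_p$, identified along the common spine inside $\Q x$, together with all the local relations. Because every relation involves a single prime, the combined presentation is consistent and of simply presented form, so $H$ is simply presented by construction; the infinite-order generators still lie in $\Q x$, whence $\rk H=1$, while $T(H)=\bigoplus_pT(G_p)$. Localizing at $p$ annihilates the $q$-torsion and trivializes the $q$-divisibility for $q\ne p$, so $H_p\cong G_p$.

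Finally, since the natural map $H\to H_p$ preserves $p$-heights, $\val{p^kx}_p$ computed in $H$ equals its value in $G_p$, namely $u^{(p)}_k$; thus $H$ is a realization of $X$. Likewise $U^p_H(j)\cong U^p_{G_p}(j)$ for all $j<\w$, so tightness for $H$ follows from tightness of each $G_p$. The hard part is the local step: arranging the division generators and cyclic summands so that amalgamating them over $\Q x$ neither raises the finite $p$-heights of $x$ nor creates spurious finite Ulm invariants, i.e.\ so that each $G_p$ is simultaneously rank $1$, simply presented, and a genuinely \emph{tight} realization. Once this is secured, the global assembly and the verifications reduce, as above, to the already-arranged local pictures.
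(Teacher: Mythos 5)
Your overall architecture---build a local realization prime by prime, assemble along the spine $\Q x$, and check realization and tightness after localizing---matches the paper's, and the assembly and localization remarks at the end are unobjectionable. But there is a genuine gap, and you name it yourself: ``the hard part is the local step\dots Once this is secured.'' That local step \emph{is} the lemma. The paper's proof consists precisely of writing down the local generators and relations and then verifying tightness by an explicit computation, so deferring it leaves the proposal without its mathematical content. For the record, the paper's recipe is: for each prime $p$ and each $k$ that is $0$ or a jump (i.e.\ $\val{p^kx}_p\ne\val{p^{k-1}x}_p+1$), adjoin a \emph{full chain} $y^k_0=p^kx$, $py^k_{j+1}=y^k_j$ of length $n=\val{p^kx}_p$ (an infinite chain when the value is $\infty$), and nothing else; the cyclic torsion then arises automatically from amalgamating the chains over the spine, with $T[p]$ having valuated basis the differences $p^{k-1}x-y^k_1$, whose values $\val{p^{k-1}x}_p$ exactly match the Ulm generators $p^{k-1}x$ of $X$---that matching is the tightness verification.

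Measured against this, the recipe you do sketch has two concrete defects. First, ``one further division generator for $p^{k+1}x$'' cannot raise $\val{p^{k+1}x}_p$ to $u^{(p)}_{k+1}$ in simply presented form: relations must read $pg=g'$, so you need a descending chain of length $u^{(p)}_{k+1}$ ending at a value-$0$ generator. Second, adjoining ``a single cyclic $p$-summand'' at each jump is either redundant or fatal. If you mean an extra summand on top of the division chain, the chain already produces that cyclic torsion (the order-$p$ element $p^kx-y_1$ above), so the extra summand doubles the Ulm invariant at that level and destroys tightness. If instead, following your model $\Z_{(p)}\oplus\Z/p^2$ with $x=(p,c)$, you intend $x$ to carry a coordinate in each new cyclic summand, this fails outright when a single prime has infinitely many jumps (e.g.\ $\val{p^kx}_p=2k$ for all $k$): an element of a direct sum has finite support, and in that case the realization does not even split as torsion plus rank-$1$ torsion-free---in any such splitting the height sequence of $x$ would eventually be $\infty$ or eventually step by exactly $1$. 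So the model misleads: the correct local picture is the amalgam of division chains over the spine, not a torsion-free group decorated with cyclic summands carrying coordinates of $x$, and the Ulm/tightness computation you postpone is exactly what must be done on that amalgam.
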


\begin{proof} For each prime $p$, consider the set $K_p\subseteq \omega$ of all $k$ such that either $k=0$ or $\val {p^k x}\ne \val {p^{k-1}x}_p+1$ (where $\infty=\infty+1$). For each prime $p$ and $k\in K_p$, define a set of generators, $Y_p^k$, and relations as follows: If $\val {p^k x}=\infty$, let $Y_p^k=\{y_{p,j}^k\}_{j<\omega}$ with relations $y_{p,0}^k=p^k x$ and $py_{p,j+1}^k =y_{p,j}^k$. On the other hand, if $0\ne \val {p^k x}=n<\omega$, let  $Y_p^k=\{y_{p, j}^k\}_{j\leq n}$ with relation $y_{p,0}^k=p^k x$ and $py_{p,j+1}^k =y_{p,j}^k$

If $H$ is the group generated $x$ and the union of all the $Y_p^k$ with the above relations, then $H$ is clearly simply presented, and a standard argument shows that it is a realization of $X$. To show that $X$ is tight in $H$, suppose $p$ is some fixed prime. It is fairly straightforward (though somewhat tedious) to observe that if $T$ is the torsion subgroup of $H$, then $T[p]$ is a free valuated vector space with a basis generated by the images of all $p^{k-1} x - y_{p,1}^k$ such that $0<k\in K_p$, and that, similarly, the non-zero Ulm invariants of $X$ are generated by the images of all such $p^{k-1} x$, showing that $U_X^p(j)\to U_H^p(j)$ is always an isomorphism, as promised.
\end{proof}

Though it could have been verified directly, in the last result, since $H$ has torsion-free rank $1$, the subgroup $X$ must be nice in $H$.

\begin{proposition}\label{flag}
Suppose $G$ is a Warfield group with bounded $p$-torsion. If $F$ is a decomposition subgroup of $G$, then $G=H\oplus S$, where $H$ is a nice tight simply presented realization of $F$ and $S$ is torsion.
\end{proposition}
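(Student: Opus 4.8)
The plan is to build the summand $H$ directly from the given decomposition basis, one infinite cyclic at a time via Lemma~\ref{realize}, and then to split off the remaining torsion $S$ by comparing Ulm invariants and invoking the classification of Warfield groups.

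First I would write $F$ as the valuated direct sum $F=\bigoplus_i\langle x_i\rangle$ coming from the decomposition basis. Since $G$ has bounded $p$-torsion, the observation preceding Lemma~\ref{realize} shows that for every $i$, every prime $p$ and every $k<\omega$ the value $|p^kx_i|_p$ is finite or $\infty$. Hence Lemma~\ref{realize} applies to each $\langle x_i\rangle$ and yields a tight simply presented realization $H_i$ of torsion-free rank $1$ in which $\langle x_i\rangle$ is nice. Put $H=\bigoplus_i H_i$. I would then check the routine fact that direct sums preserve each relevant property: $H$ is simply presented; it is a realization of $F$, because heights in a direct sum are computed coordinatewise and match the valuated direct sum structure on $F$; $F=\bigoplus_i\langle x_i\rangle$ is nice in $H$; and, since Ulm invariants are additive over direct sums, tightness of each $H_i$ gives that $U^p_F(j)\to U^p_H(j)$ is an isomorphism for every prime $p$ and every $j<\omega$. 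Thus $H$ is a nice tight simply presented realization of $F$; moreover $F$ is a decomposition subgroup of $H$ with $H/F$ totally projective (being a direct sum of cyclic and quasicyclic groups), so $H$ is itself a Warfield group.

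Next I would record the two invariant computations that drive the splitting. On the one hand, the only infinite heights produced in the construction arise from fully divisible chains, so $p^\omega H$ is $p$-divisible and hence $U^p_H(\alpha)=0$ for all $\alpha\geq\omega$; the same holds for $G$, whose bounded $p$-torsion forces $U^p_G(\alpha)=0$ once $\alpha\geq\omega$. On the other hand, the natural injections give short exact sequences $0\to U^p_F(j)\to U^p_G(j)\to R^p(j)\to 0$ of vector spaces, where $R^p(j)$ is the relative Ulm invariant of $F$ in $G$, and these are supported on finite $j$ only. Let $S$ be the (totally projective) torsion group which is the direct sum of cyclic $p$-groups realizing the invariants $R^p(j)$. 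Using tightness of $H$ and additivity of Ulm invariants, one obtains $U^p_{H\oplus S}(j)\cong U^p_F(j)\oplus R^p(j)\cong U^p_G(j)$ for all finite $j$, while both sides vanish for $\alpha\geq\omega$.

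Finally I would compare $G$ with $H\oplus S$. Embedding $F$ into $H\subseteq H\oplus S$ exhibits $F$ as a decomposition subgroup of $H\oplus S$ with totally projective quotient $(H/F)\oplus S$, so $H\oplus S$ is a Warfield group sharing the decomposition subgroup $F$ with $G$; consequently the two groups have the same Warfield invariants. Since by the previous paragraph they also have identical Ulm invariants, the classification and uniqueness theory for Warfield groups (\cite{W},\cite{HR},\cite{HW}) yields an isomorphism $G\cong H\oplus S$, which in its sharp height-preserving form extending the identity on $F$ gives the desired internal decomposition $G=H\oplus S$ with $H$ a nice tight simply presented realization of $F$ and $S$ torsion. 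I expect the last step to be the main obstacle: marshalling the classification in the form that both matches invariants and respects the given decomposition subgroup $F$. By contrast, the construction of $H$ from Lemma~\ref{realize} and the invariant bookkeeping are comparatively routine; note also that, once $G\cong H\oplus S$ is established, the bounded $p$-torsion of $G$ is inherited by $H$, although this is not needed in the argument.
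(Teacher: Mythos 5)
Your proposal follows essentially the same route as the paper's proof: apply Lemma~\ref{realize} to each $\langle x_i\rangle$ from the decomposition basis, take $H=\bigoplus_i H_i$, build a direct sum of cyclics $S$ realizing the relative Ulm invariants of $F$ in $G$ (equivalently, $f^p_F(j)+f^p_S(j)=f^p_G(j)$), and then extend the identity on $F$ to an isomorphism $G\cong H\oplus S$ via the uniqueness theory for Warfield groups. Your version spells out the invariant bookkeeping and the appeal to the classification in more detail than the paper does, but the decomposition, the key lemma, and the final extension step are the same.
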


\begin{proof}  Let $\{x_i\}_{i\in I}$ be a decomposition basis for $F$. For each $i\in I$, $X_i:=\langle x_i\rangle$ clearly satisfies the hypotheses of Lemma~\ref{realize}; let $H_i$
be a rank 1 simply presented tight realization of $X_i$ and $H$ be the (external) direct sum of the $H_i$. Construct a direct sum of cyclics $S$ such that for all primes $p$ and $j<\omega$, we have $f^p_F(j)+f^p_S(j)=f^p_G(j)$. It follows that the relative Ulm invariants of $F$ in $G$ agree with those of $F$ in $A\oplus S$ (both agree with $f_S^p(j)$ for all $p$ and $j$). Therefore, the identity map $F\to F$ extends to an isomorphism $G\to H\oplus S$, as required.
\end{proof}

The following describes all Warfield groups with bounded $p$-torsion.

\begin{proposition}\label{Warfield}
Suppose $G$ is a group with bounded $p$-torsion. Then the following are equivalent:

(a) $G$ is simply presented;

(b) $G$ is the direct sum of a set of groups of torsion-free rank at most 1;

(c)  $G$ is a Warfield group.
\end{proposition}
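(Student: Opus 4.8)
The plan is to prove the three equivalences by routing everything through statement (c), using Proposition~\ref{flag} for the implications out of (c) and the definition of a Warfield group together with the observation preceding Lemma~\ref{realize} for the implication into (c). The implication (a) $\Rightarrow$ (c) is immediate, since a simply presented group is a summand of itself and hence Warfield.

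For (c) $\Rightarrow$ (a) and (c) $\Rightarrow$ (b) I would apply Proposition~\ref{flag} directly: a Warfield group $G$ with bounded $p$-torsion splits as $G=H\oplus S$ with $H$ a nice tight simply presented realization of a decomposition subgroup $F$ and $S$ torsion, where (as produced in the proof of Proposition~\ref{flag}) $H=\bigoplus_i H_i$ with each $H_i$ of torsion-free rank $1$. Being a summand of $G$, the group $S$ again has bounded $p$-torsion, so each $S_p$ is a bounded $p$-group and hence, by Prüfer's theorem, a direct sum of cyclics; thus $S$ is itself a direct sum of cyclic groups. Reading this decomposition two ways gives both conclusions: $G$ is a direct sum of the simply presented pieces $H_i$ and the cyclic summands of $S$, so it is simply presented (giving (c) $\Rightarrow$ (a)); and each $H_i$ has torsion-free rank $1$ while each cyclic summand of $S$ has torsion-free rank $0$, so $G$ is a direct sum of groups of torsion-free rank at most $1$ (giving (c) $\Rightarrow$ (b)).

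The substantive direction is (b) $\Rightarrow$ (c). Writing $G=\bigoplus_i G_i$ with each $G_i$ of torsion-free rank at most $1$, and recalling that a direct sum of Warfield groups is Warfield, it suffices to show each $G_i$ is Warfield. If $G_i$ is torsion it inherits bounded $p$-torsion and is therefore a direct sum of cyclics, which is simply presented and hence Warfield. If $G_i$ has torsion-free rank $1$, I would pick $x_i\in G_i$ of infinite order and set $F_i=\langle x_i\rangle$. This is a decomposition subgroup of $G_i$: it is a single valuated cyclic, it is nice because $G_i$ has torsion-free rank $1$ (the remark following Lemma~\ref{realize}), and $G_i/F_i$ is torsion so $F_i$ is quasi-essential. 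To conclude that $G_i$ is Warfield it then remains only to verify that $G_i/F_i$ is totally projective.

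This last point is where I expect the main work to lie. I would argue prime by prime, localizing the exact sequence $0\to\langle x_i\rangle\to G_i\to G_i/\langle x_i\rangle\to 0$ at each $p$ and using that the torsion $T_p$ of $(G_i)_p$ is bounded while $(G_i)_p/T_p$, being torsion-free of rank $1$ over $\mathbb Z_{(p)}$, is either $\mathbb Z_{(p)}$ or $\Q$. In the first case the $p$-component of $G_i/\langle x_i\rangle$ is an extension of a finite group by a bounded group, hence bounded; in the second it contains a bounded subgroup (a copy of $T_p$) with divisible quotient $\Z(p^\infty)$, and a short computation (if $p^nB=0$ and $Q/B$ is divisible then $p^nQ=p^{2n}Q$, whence $p^nQ$ is divisible and $Q=p^nQ\oplus C$ with $C$ bounded) shows this $p$-component is divisible $\oplus$ bounded. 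Either way each $p$-component is totally projective, so $G_i/\langle x_i\rangle=\bigoplus_p (G_i/\langle x_i\rangle)_p$ is totally projective and $G_i$ is Warfield. The only delicate points to check carefully are the niceness citation, the exactness of localization used to identify the $p$-components, and the splitting of the divisible part; the remainder is routine bookkeeping.
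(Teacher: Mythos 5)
Your proposal is correct and takes essentially the same route as the paper: the substantive direction (b) $\Rightarrow$ (c) is the paper's own argument (niceness of $\langle x\rangle$ in a rank-one group, then the prime-by-prime localization showing $(G/\langle x\rangle)_p$ is bounded or divisible-plus-bounded), and the passage back to (a) invokes Proposition~\ref{flag} exactly as the paper does. The only differences are cosmetic: you route the cycle through (c), extracting (b) from the explicit rank-one decomposition inside the proof of Proposition~\ref{flag} together with Pr\"ufer's theorem for the torsion summand, where the paper simply cites (a) $\Rightarrow$ (b) as well known, and you write out the splitting computation the paper calls ``readily checked''; both variations are sound.
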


\begin{proof} That (a) implies (b) is well-known.

To show (b) implies (c), it is clear that if $G$ is torsion, then it is a direct sum of cyclic groups and hence simply presented. So, assume $G$ has torsion-free rank~1. Let $x\in G$ have infinite order. If $X=\langle x\rangle$ then it is well known that $X$ is nice in $G$. For each prime $p$, we claim that $(G/X)_p$ is isomorphic to $B\oplus D$, were $B$ is bounded and $D$ is either $0$ or $\mathbb Z_{p^\infty}$. This will imply that $G/X$ will be totally projective, so that $G$ is Warfield.

To verify the claim, let $p$ be a prime and consider the localizations $X_{p}$ and $G_{p}$; so $G_{p}/X_p$ is isomorphic to $(G/X)_p$. Note that $T_{p}$ is a summand of $G_{p}$; suppose $G_{p}=T_{p}\oplus Y_{p}$. Since $Y_{p}$ is either isomorphic to $\mathbb Z_{(p)}$ or $\mathbb Q$, it is readily checked that in the first case $(G/X)_p$ is bounded and in the second case, it is isomorphic to the direct sum of a bounded group and a copy of $\mathbb Z_{p^\infty}$.

Finally, (c) implies (a) by Proposition~\ref{flag}.
\end{proof}

The following standard criterion is a direct consequence of Proposition~\ref{Warfield}.

\begin{corollary}\label{free}
A torsion-free group is Warfield if, and only if, it is completely decomposable.
\end{corollary}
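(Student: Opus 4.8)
The plan is to deduce this directly from Proposition~\ref{Warfield}, observing first that any torsion-free group falls within its hypotheses. Indeed, if $G$ is torsion-free then $T=T(G)=\{0\}$, so every $T_p$ is trivial and in particular bounded; hence $G$ has bounded $p$-torsion and the three equivalent conditions of Proposition~\ref{Warfield} are at our disposal. This is the observation that makes the corollary a genuine specialization rather than requiring any new argument.

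Next I would invoke only the equivalence (b) $\Leftrightarrow$ (c): the group $G$ is Warfield if, and only if, it is a direct sum of a set of groups each of torsion-free rank at most $1$. The one point that deserves a remark is that a priori the summands furnished by (b) might carry torsion. However, every direct summand of a torsion-free group is again torsion-free, so each of these summands is in fact a torsion-free group of rank $0$ or $1$. Discarding the trivial ones, we see that $G$ decomposes as a direct sum of rank-$1$ torsion-free groups, which is precisely the definition of a \cd\ group. For the converse, if $G$ is \cd\ then by definition it is a direct sum of rank-$1$ torsion-free groups, each of torsion-free rank at most $1$, so condition (b) holds and Proposition~\ref{Warfield} returns that $G$ is Warfield.

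I expect no real obstacle here, since the statement is essentially Proposition~\ref{Warfield} read in the torsion-free setting. The only item worth spelling out is the elementary remark that summands of a torsion-free group are torsion-free, as this is exactly what collapses ``rank at most $1$'' to ``zero or rank $1$ torsion-free'' and thereby identifies the decomposition in condition (b) with complete decomposability.
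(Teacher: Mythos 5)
Your proof is correct and matches the paper exactly: the paper states Corollary~\ref{free} as a direct consequence of Proposition~\ref{Warfield}, and your argument via the equivalence of (b) and (c), together with the observation that summands of a torsion-free group are torsion-free, is precisely the intended specialization.
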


We now describe the balanced-projective groups with bounded $p$-torsion.
Imitating \cite{W}, recall that a {\it generalized height} $h$ means the formal direct product $\prod_{p\in \mathbb{P}} p^{v(p)}$ (where $\mathbb P$ is the set of primes), and if $G$ is a group, then $h(G):=\bigcap_{p\in \mathbb{P}} p^{v(p)}G$, where $v(p)$ is either an ordinal or the symbol $\infty$. A short-exact sequence $$0\to A\to B\to C\to 0$$ is termed {\it balanced} if, for every generalized height $h$, the induced sequence $$0\to h(A)\to h(B)\to h(C)\to 0$$ is exact. A group $P$ is said to be {\it balanced-projective} if it is projective with respect to all such sequences, that is, for any such sequence, the induced map $\mathrm{Hom}(P,B)\to \mathrm{Hom}(P,C)$ is surjective.

The following assertion describes all balanced-projective groups with bounded $p$-torsion.

\begin{proposition}\label{balanced}
Suppose $G$ is a group with bounded $p$-torsion. Then the following are equivalent:

(a) $G$ is balanced-projective;

(b) $G$ is a splitting mixed Warfield group;

(c) $G\cong T\oplus A$, where $A$ is a completely decomposable torsion-free group.
\end{proposition}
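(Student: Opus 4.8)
The plan is to establish the three equivalences by first disposing of (b) $\Leftrightarrow$ (c) with the structural results already in hand, then handling (c) $\Rightarrow$ (a) by a direct-sum argument, and finally proving the substantive implication (a) $\Rightarrow$ (c). Throughout I would use that, since $G$ has bounded $p$-torsion, its torsion subgroup $T=\bigoplus_p T_p$ is a direct sum of bounded $p$-groups, hence a direct sum of cyclic groups; in particular $T$ is totally projective, so simply presented and Warfield. For (b) $\Rightarrow$ (c), a splitting mixed Warfield group satisfies $G=T\oplus C$ with $C\cong G/T$ torsion-free; as a summand of the Warfield group $G$, the group $C$ is itself a torsion-free Warfield group, so by Corollary~\ref{free} it is completely decomposable and we may take $A=C$. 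Conversely, for (c) $\Rightarrow$ (b), if $G=T\oplus A$ with $A$ completely decomposable torsion-free, then $A$ is Warfield by Corollary~\ref{free} and $T$ is Warfield as just noted, so $G$ is a direct sum of two Warfield groups, hence Warfield, and it is manifestly splitting.

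For (c) $\Rightarrow$ (a) I would argue that $G=T\oplus A$ is a direct sum of balanced-projective groups. The torsion summand $T$ is a direct sum of cyclic groups, hence totally projective, and totally projective torsion groups are balanced-projective. The torsion-free summand $A$ is completely decomposable, and the completely decomposable torsion-free groups are precisely the torsion-free balanced-projective groups (the balanced-projective analogue of Corollary~\ref{free}; see \cite{HR},\cite{HW},\cite{L}). Since the class of balanced-projective groups is closed under arbitrary direct sums, $G=T\oplus A$ is balanced-projective.

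The heart of the matter is (a) $\Rightarrow$ (c). First, every balanced-projective group is Warfield (as recalled in the Introduction), so by Proposition~\ref{Warfield} we may write $G=\bigoplus_{j} G_j$ with each $G_j$ of torsion-free rank at most $1$; consequently $T=\bigoplus_j T(G_j)$ and the torsion-free quotient $G/T\cong \bigoplus_j G_j/T(G_j)$ is a direct sum of rank-$\le 1$ torsion-free groups, i.e.\ completely decomposable, and therefore balanced-projective. It then suffices to prove that the exact sequence
\[
0\to T\to G\to G/T\to 0
\]
is \emph{balanced}: granting this, the balanced-projectivity of $G/T$ lets us lift the identity map of $G/T$ along $G\to G/T$, producing a splitting and hence the decomposition $G\cong T\oplus(G/T)$ of type (c). I expect the verification that $T$ is a balanced subgroup of $G$ to be the main obstacle, and the one place where both hypotheses are genuinely used. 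Because $G$ has bounded $p$-torsion, every height in $G$ is finite or $\infty$, so balancedness reduces to checking that each infinitely-$p$-divisible element of $G/T$ lifts to an infinitely-$p$-divisible element of $G$; this is exactly the compatibility between the torsion and the torsion-free part that can fail for a merely Warfield group, and it is precisely here that balanced-projectivity, rather than the weaker Warfield property, must be invoked. Once balancedness is secured the splitting is immediate, closing the cycle (a) $\Rightarrow$ (c) $\Rightarrow$ (a) alongside the already-established (b) $\Leftrightarrow$ (c).
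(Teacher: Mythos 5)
Your treatment of (b) $\Leftrightarrow$ (c) and of (c) $\Rightarrow$ (a) is correct and essentially matches the paper. The genuine gap is in (a) $\Rightarrow$ (c), where you reduce the whole implication to the assertion that $T$ is balanced in $G$ and then never prove it: you say this ``is precisely where balanced-projectivity must be invoked'' but supply no mechanism. Balanced-projectivity of $G$ is a lifting property over balanced epimorphisms \emph{onto} $G$; it says nothing directly about exact sequences with $G$ in the middle, so using it to establish balancedness of $0\to T\to G\to G/T\to 0$ is circular --- you would need that balancedness already in hand before projectivity of $G/T$ can be applied to split the sequence. Worse, your proposed reduction of balancedness to ``each infinitely-$p$-divisible element of $G/T$ lifts to an infinitely-$p$-divisible element of $G$'' is false for the global notion of balanced used here: a generalized height $h=\prod_{p}p^{v(p)}$ constrains all primes simultaneously, and the obstruction can occur with every coordinate $v(p)$ finite. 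The paper's example immediately following this proposition (the simply presented group generated by $t$ and the $s_p$ with $p^2s_p=pt$) exhibits exactly this: $[t]\in G/T$ has $h([t])=(1,1,1,\dots)$, all coordinates finite, yet every torsion-free $x\in G$ has $\val{x}_p=0$ for almost all $p$, so $T$ fails to be balanced at a height with no infinite coordinate at all. Single-prime divisibility checks cannot detect this.

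The paper's route, which repairs your argument, runs the projectivity in the opposite direction: it uses projectivity of $G$ \emph{itself} against a balanced epimorphism built by hand onto $G$. Since bounded $p$-torsion forces every height coordinate of a torsion-free $x\in G$ to be finite or $\infty$, one can choose a rank-1 torsion-free group $Q_x$ with an element $a$ satisfying $h(a)=h(x)$, and the resulting map $\overline G:=T\oplus(\oplus_{x\in G\setminus T}Q_x)\to G$ is surjective and balanced-exact. Projectivity of $G$ with respect to this particular sequence realizes $G$ as a summand of the splitting group $\overline G$ containing $T$, whence $G$ splits; this gives (a) $\Rightarrow$ (b), and (c) follows by your own (b) $\Rightarrow$ (c) via Corollary~\ref{free}. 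The moral difference: rather than hoping the canonical torsion sequence is balanced (it need not be detectable prime-by-prime, and proving it directly is as hard as the theorem), one constructs a balanced sequence tailored so that the projectivity hypothesis applies.
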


\begin{proof} Suppose that (a) holds, i.e., $G$ is balanced-projective. It is well known that $G$ must be a Warfield group. Note that, for every torsion-free element $x\in G$, the coordinates of $h(x)$ are integers or $\infty$. It follows that there is a torsion-free group of rank $1$, say $Q_x$, with $a\in Q_x$ and $h(a)=h(x)$. The assignment $a\mapsto x$ defines a homomorphism $Q_x\to G$. These determine a homomorphism $$\overline G:=T\oplus(\oplus_{x\in {G\setminus T}} Q_x)\to G,$$ which is clearly surjective and balanced-exact. Thus, it follows that $G$ is isomorphic to a summand of $\overline G$ containing $T$, so that $G$ is necessarily a splitting mixed group, as stated.

That (b) implies (c) follows directly from Corollary~\ref{free}. Finally, since (torsion) cyclic groups and rank-1 torsion-free groups are clearly balanced-projective, (c) implies (a), completing the proof.
\end{proof}

The following example is crucial.

\begin{example}
(a) There is a Bassian group that is {\it not} Warfield.

(b) There is a Bassian Warfield group that is {\it not} balanced-projective.
\end{example}

\begin{proof}
For (a), we just consider any torsion-free group of finite rank that is not completely decomposable. For example, any pure subgroup of the $p$-adic integers, $J_p$, of finite rank exceeding~1 will satisfy the requirements. For (b), let $G$ be generated by $t$ together with $s_p$ for each prime $p$, subject to the relations $p^2 s_p =pt$. Clearly, $G$ is simply presented, and hence Warfield. Since $G$ has rank~1 and $T_p\cong \mathbb Z_p$ for each prime, $G$ is Bassian. However, it is easy to check that $[t]\in G/T$ has $h([t])=(1,1,1,\dots)$ where as, for any torsion-free element $x\in G$, $\val x_p=0$ for all but finitely many primes $p$. It follows that $T$ is not a summand of $G$, so that $G$ is not balanced-projective, as wanted.
\end{proof}

Suppose $T$ is a torsion group with bounded $p$-torsion. Note that $\prod_p T_p/T$ is always torsion-free divisible. Suppose $G$ is a pure subgroup of $\prod_p T_p$ containing $T$. For want of a better term, we will call such a group {\it PSP} or just a {\it PSP-group} for ``pure subgroup of the product". Alternately, if $G$ has bounded $p$-torsion, then $G$ is PSP if, and only if, $G$ is Hausdorff in the $\mathbb Z$-adic topology and $T$ is dense in $G$ in that topology. This follows since $\prod_p T_p$ will be the completion of $T$ in the $\mathbb Z$-adic topology.

\begin{proposition}\label{PSP} A {\rm{PSP}}-group $G$ is Warfield if, and only if, there are decompositions $T=\oplus_{i\in I}T^i$ and $G=\oplus_{i\in I}G^i$ such that, for each $i\in I$, $G^i$ is a PSP-group of rank $1$ and torsion subgroup $T^i$.
\end{proposition}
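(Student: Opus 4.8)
The plan is to reduce both directions to Proposition~\ref{Warfield}, which for groups of bounded $p$-torsion identifies the Warfield groups with the direct sums of groups of torsion-free rank at most $1$, supplemented by the elementary fact that the PSP property is inherited by direct summands.

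\textbf{The reverse implication} is then immediate: if $G=\oplus_{i\in I}G^i$ with each $G^i$ of torsion-free rank $1$, then $G$ is a direct sum of groups of torsion-free rank at most $1$, so Proposition~\ref{Warfield} gives that $G$ is Warfield. No use of the PSP hypothesis on the $G^i$ is needed here beyond the rank condition.

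\textbf{For the forward implication}, assume $G$ is a Warfield PSP-group. Since $G$ has bounded $p$-torsion, Proposition~\ref{Warfield} furnishes a decomposition $G=\bigl(\oplus_{i\in I}K^i\bigr)\oplus S$ in which each $K^i$ has torsion-free rank exactly $1$ and $S$ is torsion (the direct sum of the rank-$0$ constituents). The crucial claim is that every direct summand of a PSP-group is again PSP, which I would establish using the intrinsic characterization recorded just before the statement: a group of bounded $p$-torsion is PSP exactly when it is Hausdorff in the $\mathbb Z$-adic topology with $T$ dense. For a summand $G=G'\oplus G''$ one has $nG=nG'\oplus nG''$, whence $G'\cap nG=nG'$; thus the subspace and $\mathbb Z$-adic topologies on $G'$ agree, and Hausdorffness is inherited. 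Density of the torsion then transfers by projection: given $x\in G'$ and $n\geq 1$, pick $t\in T$ with $x-t\in nG$ and apply the projection $\pi\colon G\to G'$ to obtain $x-\pi(t)\in nG'$ with $\pi(t)\in T(G')$.

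With the claim in hand, each $K^i$ is already a rank-$1$ PSP-group with torsion $T(K^i)$, and it remains to absorb $S$ so as to produce a matching full torsion decomposition. Assuming $G$ is not torsion (the degenerate torsion case, where $G=T$, being handled separately), fix some $i_0\in I$ and set $G^{i_0}:=K^{i_0}\oplus S$ and $G^i:=K^i$ for $i\neq i_0$; each $G^i$ is a direct summand of $G$, hence a rank-$1$ PSP-group, and putting $T^i:=T(G^i)$ yields both $G=\oplus_{i\in I}G^i$ and $T=\oplus_{i\in I}T^i$, as required. I anticipate that the one genuinely substantive point is the density half of the summand claim; the Hausdorff and bounded-torsion parts, together with the regrouping of $S$, amount to routine bookkeeping once that claim is in place.
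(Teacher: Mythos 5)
Your proposal is correct and takes essentially the same route as the paper's proof: both directions reduce to Proposition~\ref{Warfield}, and the forward direction rests on the Hausdorff/dense-torsion characterization of PSP-groups applied to the summands --- a step the paper dismisses with ``clearly'' and you usefully spell out, along with the explicit absorption of the torsion part $S$ into one rank-$1$ summand, which the paper likewise glosses over. The one residual wrinkle, a defect of the statement rather than of your argument, is the case of a nonzero torsion Warfield PSP-group, which admits no decomposition into rank-$1$ pieces at all; your ``handled separately'' cannot actually be carried out there, but the paper's proof silently has the same problem.
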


\begin{proof}
Certainly, if $G$ is Warfield, then by Theorem~\ref{Warfield}, $G$ must decompose into groups of torsion-free rank $1$, as indicated. Clearly, each $G_i$ will be Hausdorff in the $\mathbb Z$-adic topology with $T_i$ as a dense subgroup. Therefore, each $G_i$ will also be a PSP-group.

The converse is an immediate consequence of Theorem~\ref{Warfield}.
\end{proof}

\begin{example}
There is a Bassian PSP-group of rank $2$ that is not Warfield.
\end{example}

\begin{proof}
Let $(a_j,b_j)$ for $j\in \mathbb N$ be an enumeration of $\mathbb Z^2\setminus \{(0,0)\}$. For each $j\in \mathbb N$, we can clearly find $x_j, y_j\in \mathbb Z$ and a prime $p_j>p_{j-1}$ such $p_j>\vert a_j x_j+b_jy_j\vert\ne 0$.

Let $T=\oplus_{j\in \mathbb N} \mathbb Z_{p_j}$ and  $x:=[x_j]$, $y:=[y_j]$ in  $P:=\prod_{j\in \mathbb N} \mathbb Z_{p_j}$. If $a_j$ and $b_j$ are integers, not both 0, and we consider the corresponding linear combination $a_j x+b_j y\in P$, then the entire point of our construction is that such a sum can be 0 for only a finite number of coordinates (those for $i<j$). In particular, $x$ and $y$ are linearly independent over $T$ and there is a PSP- group of torsion-free rank $2$ containing both.

If there were decompositions $T=T^1\oplus T^2$ and $G=G^1\oplus G^2$ as in Proposition~\ref{PSP}, then clearly both $T^1$ and $T^2$ must have an infinite number of non-zero components. Therefore, no linear combination of $x$ and $y$ can be a non-zero element of either summand, which is clearly a contradiction.

Therefore, $G$ is a Bassian PSP-group, but not Warfield, as claimed.
\end{proof}

\section{Groups with bounded $p$-torsion and finite torsion-free rank }\label{B}

We begin by characterizing the groups described in the title to this section.

\begin{theorem}\label{decompose} The group $G$ has finite torsion-free rank and bounded $p$-torsion if, and only if, is decomposable as $G=H\oplus S$, where $H$ is Bassian and $S$ is a torsion group with bounded $p$-torsion.
\end{theorem}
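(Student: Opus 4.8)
The statement is a biconditional whose reverse implication is routine, so my plan concentrates on the forward direction. For $(\Leftarrow)$ I would simply note that if $G=H\oplus S$ with $H$ Bassian and $S$ torsion of bounded $p$-torsion, then $G$ and $H$ have the same (finite) torsion-free rank, while $T_p(G)=T_p(H)\oplus S_p$ is bounded because $T_p(H)$ is finite and $S_p$ is bounded; hence $G$ has finite torsion-free rank and bounded $p$-torsion.

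For $(\Rightarrow)$ the plan is to split off a torsion summand $S=\oplus_p S_p$ chosen so precisely that the complement has \emph{finite} $p$-torsion, whence (having finite torsion-free rank) it is Bassian by the Main Theorem of \cite{CDG1}. First I would fix a maximal independent set $x_1,\dots,x_n$ and set $L=\langle x_1,\dots,x_n\rangle\cong\mathbb Z^n$, so that $G/L$ is torsion. For each prime $p$, since $T_p$ is a bounded pure subgroup it is a summand $G=T_p\oplus A_p$ with $A_p$ free of $p$-torsion; let $\tau_p\colon G\to T_p$ denote the projection. The decisive choice is to take $F_p$ to be a \emph{finite} direct summand of $T_p$ that contains the finitely many elements $\tau_p(x_1),\dots,\tau_p(x_n)$ — this is possible because a bounded group is a direct sum of cyclics and any finite subset lies in a finite sub-sum — and to write $T_p=F_p\oplus S_p$. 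Letting $\pi_p\colon G\to S_p$ be the projection along $F_p\oplus A_p$, the choice guarantees $\pi_p(x_i)=0$ for every $i$.

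The key step, and the point I expect to be the main obstacle, is to assemble the $\pi_p$ into a single retraction $\pi\colon G\to S:=\oplus_p S_p$, i.e.\ to show that $\sum_p\pi_p$ is well defined. This is exactly where finite torsion-free rank is used: given $g\in G$, pick $m\ge 1$ with $mg\in L$; then $m\pi_p(g)=\sum_i c_i\,\pi_p(x_i)=0$ for every $p$, and since $\pi_p(g)\in S_p$ is $p$-torsion, $\pi_p(g)=0$ whenever $p\nmid m$, so $\pi_p(g)=0$ for all but finitely many $p$. Hence $\pi=\sum_p\pi_p$ is a genuine homomorphism; as $S_q\subseteq\ker\pi_p$ for $q\ne p$ and $\pi_p|_{S_p}=\mathrm{id}$, it restricts to the identity on $S$, so $\pi$ is a retraction and $S$ is a direct summand. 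Setting $H=\ker\pi$ then yields $G=H\oplus S$, where $T(H)=\ker(\pi|_{T})=\oplus_p F_p$ has finite $p$-components and $H$ has torsion-free rank $n$; thus $H$ is Bassian and the proof is complete. The essential subtlety to flag is the \emph{absorbing} choice of $F_p$: without arranging $\tau_p(x_i)\in F_p$, the sum $\sum_p\pi_p$ need not converge, since the torsion-free generators can have nonzero $S_p$-component for infinitely many primes $p$.
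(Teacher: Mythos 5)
Your proof is correct, and it takes a genuinely different, more elementary route than the paper's. The paper argues homologically: it first enlarges $G$ to a group $G'$ with $G'/T\cong D$ divisible (via surjectivity of ${\rm Ext}(D,T)\to{\rm Ext}(G/T,T)$), then identifies ${\rm Ext}(D,T)\cong{\rm Hom}(D,P/T)$ for $P=\prod_p T_p$, using ${\rm Hom}(D,P)=0$ and ${\rm Ext}(D,P)={\rm Pext}(D,P)=0$ because each bounded $T_p$ is pure-injective; the extension class $\xi$ of $0\to T\to G\to D\to 0$ is then represented by finitely many elements $x_j\in P$ corresponding to a maximal independent set of $D$, and the decisive observation --- identical in substance to your ``absorbing'' choice --- is that finite rank lets one place the $T_p$-coordinates of all the $x_j$ inside a single finite summand $F_p$ of $T_p$, so that with $T_p=F_p\oplus S_p$ the class $\xi$ lies in the summand ${\rm Ext}(D,F)$ of ${\rm Ext}(D,F)\oplus{\rm Ext}(D,S)$, which splits $S$ off. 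You implement the same finiteness idea internally instead: choosing $F_p$ to contain $\tau_p(x_1),\dots,\tau_p(x_n)$ and verifying that $\pi=\sum_p\pi_p$ is pointwise a finite sum (your argument that $m\pi_p(g)=0$ with $\pi_p(g)$ being $p$-torsion forces $\pi_p(g)=0$ for $p\nmid m$ is sound), you build an explicit retraction $G\to S$ whose kernel $H$ satisfies $T(H)=\ker(\pi|_T)=\oplus_p F_p$ and $r_0(H)=r_0(G)$, so $H$ is Bassian by the Main Theorem of \cite{CDG1}. Your version buys self-containedness: no passage to a divisible quotient, no ${\rm Pext}$ computation or appeal to pure-injectivity, and the complement is constructed inside $G$ rather than extracted from a splitting of an ${\rm Ext}$ group. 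The paper's version buys reusable machinery --- the reduction to divisible $G/T$ and the ${\rm Hom}$--${\rm Ext}$ identification reappear later (e.g., in the proof of Corollary~\ref{embeds}) --- and, by working with the extension class, it absorbs the finiteness once into the choice of the $x_j$ instead of having to check that an infinite sum of projections is well defined, which is exactly the step you correctly flagged as the main obstacle in the direct approach.
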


\begin{proof} If $G=H\oplus S$ as above, it is easy to check that $r_0(G)=r_0(H)$ is finite and $G$ has  bounded $p$-torsion. So, assume $G$ satisfies these two conditions.

Let $D$ be a divisible hull of $G/T$, so $D$ is a direct sum of a finite number of copies of $\mathbb Q$. Since the map $${\rm Ext}(D, T)\to {\rm Ext} (G/T, T)$$ is surjective, there is a group $G'$ containing $G$ such that $G'/T\cong D$. If there is a similar decomposition $G'=H'\oplus S'$ for $G'$, it readily follows that $G'$ also has torsion $T$ and we can simply let $S=S'$ and $H=H'\cap G$. So, there is no loss of generality in assuming that $G/T= D$ is divisible.

Let $P=\prod_{p\in \mathbb P} T_p$; so we can think of $T$ as a subgroup of $P$ and $Q:=P/T$ is torsion-free divisible. Now, the short-exact sequence $$0\to T\to P\to Q\to 0$$ determines a long-exact sequence, which determines an isomorphism, as follows:
$$
[{\rm Hom}(D, P)=]\ \ 0 \to {\rm Hom}(D, Q) \to {\rm Ext}(D, T) \to {\rm Ext}(D, P)\to \cdots
$$
Note that $${\rm Ext}(D, P)= {\rm Pext}(D, P)\cong \prod_{p\in \mathbb P} {\rm Pext}(D, T_p)=0,$$ since $T_p$ is bounded, and hence pure-injective.  Therefore, ${\rm Hom}(D, Q) \to {\rm Ext}(D, T)$ is an isomorphism.

We can think of the extension $$\xi: 0 \to T \to G \to D \to 0$$ as an element of ${\rm Ext}(D, T)$; in fact, suppose $\phi:D \to Q$ is the corresponding element of ${\rm Hom}(D, Q).$  Let $y_1, \dots, y_n$ be a maximal linearly independent subset of $D$. So, $\phi$ is completely determined by the $n$ elements $\phi(y_1),\dots, \phi(y_n)$ of $Q$. For $j=1,\dots, n$, choose $x_j\in P$ so that $\phi(y_j)= x_j+T$. Note that for each prime $p$, the $T_p$ coordinates from all of the $x_j$ can be chosen to come from a single finite summand of $T_p$. In other words, for each prime $p$ there is a decomposition $T_p= F_p\oplus S_p$, where $F_p$ is finite, such that if $F=\oplus F_p$ and $P_F=\prod F_p$, then $x_j\in P_F$ for each $j$.

So, if we also set
 $S=\oplus S_p$ and $P_S=\prod S_p$, then in the obvious isomorphism
$$
            Q=P/T \cong  P_F/F  \oplus P_S/S,
$$
each $\phi(y_j)\in P_F/F$ for $j=1, \dots, n$.

It follows that in the corresponding decomposition $${\rm Hom}(D, Q)\cong {\rm Hom}(D, P_F/F)\oplus {\rm Hom}(D, P_S/S)$$ that we can view $\phi$ is an element of the first summand. Therefore, in the other corresponding decomposition $${\rm Ext}(D, T)\cong {\rm Ext}(D, F)\oplus {\rm Ext}(D, S),$$ we can think of $\xi$ also as being an element of the first summand. This immediately implies the required decomposition, as asserted.
\end{proof}

We continue with the following perhaps unsurprising result, a case of which will be used in the sequel.

\begin{proposition}\label{quotient} Suppose $G$ is a group of finite torsion-free rank with bounded $p$-torsion. If $K$ is any subgroup of $G$ such that $G/K$ is torsion, then $G/K$ is actually totally projective. In fact, for each prime $p$, $(G/K)_p\cong G_p/K_p$ will be the direct sum of a divisible group and a bounded group.
\end{proposition}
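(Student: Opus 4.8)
The plan is to reduce everything to a single prime and then to show that each primary component of $G/K$ is the direct sum of a divisible group and a bounded group. Since such a $p$-group is totally projective (a bounded group is a direct sum of cyclics, and divisible torsion groups are admitted as totally projective), and since $G/K=\bigoplus_p(G/K)_p$ is torsion, this will give that $G/K$ is totally projective. Because $\mathbb Z_{(p)}$ is flat, localizing $0\to K\to G\to G/K\to 0$ yields $(G/K)_p\cong G_p/K_p$, and as $G/K$ is torsion this is precisely its $p$-primary component. Note that $G_p$ has the same finite torsion-free rank $n$ as $G$, and its torsion subgroup is $T_p$, which is bounded, say $p^mT_p=0$.

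Fix $p$ and write $R=\mathbb Z_{(p)}$, a discrete valuation ring. Put $Y_p=G_p/T_p$, a torsion-free $R$-module of rank $n$, let $\bar K$ be the image of $K_p$ in $Y_p$, and let $K_0=K_p\cap T_p$. Reducing $G_p/K_p$ modulo $(T_p+K_p)/K_p$ produces the exact sequence
$$0\to T_p/K_0\to G_p/K_p\to Y_p/\bar K\to 0,$$
in which $T_p/K_0$ is bounded and $Y_p/\bar K\cong G_p/(T_p+K_p)$ is torsion (so $\bar K$ has full rank $n$ in $Y_p$). Recall that a $p$-group $A$ is divisible $\oplus$ bounded exactly when $p^kA$ is divisible for some $k$; moreover, if $B\le A$ satisfies $p^aB=0$ and $p^b(A/B)$ is divisible, then a short computation (using that $p^b(A/B)$ is the image of $p^bA$ and that $p^bA\cap B$ is killed by $p^a$) shows that $p^{a+b}A$ is divisible. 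Hence it suffices to prove that $Y_p/\bar K$ is divisible $\oplus$ bounded.

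This last point is the crux. I would show that $Y_p/\bar K$ has finite socle. Writing $S=\{x\in Y_p:px\in\bar K\}$, we have $pS\subseteq\bar K\subseteq S$, so $(Y_p/\bar K)[p]=S/\bar K$ is a quotient of $S/pS$. Over the DVR $R$, any elements of $S$ that are $\mathbb F_p$-independent modulo $pS$ are automatically $R$-independent (clearing the minimal power of $p$ from any $R$-relation would otherwise produce a nontrivial relation mod $pS$); therefore $\dim_{\mathbb F_p}(S/pS)$ is at most the rank of $S$, which is at most $n$, and so $\dim_{\mathbb F_p}(Y_p/\bar K)[p]\le n$. A torsion $p$-group with finite socle is the direct sum of finitely many cyclic and quasicyclic groups (a classical fact; see \cite{F}), i.e.\ it is divisible $\oplus$ bounded. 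Combined with the extension step above, this gives that $G_p/K_p$ is divisible $\oplus$ bounded, which is the asserted local structure and completes the argument.

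I expect the main obstacle to be exactly this socle bound for $Y_p/\bar K$: the torsion subgroup $T_p$ may have infinite socle, so one cannot bound the socle of $G_p/K_p$ directly. Isolating the finite-rank torsion-free quotient $Y_p$ and exploiting that $\mathbb Z_{(p)}$ is a DVR is precisely what forces finiteness there, and the bounded torsion piece is then reattached by the elementary extension lemma.
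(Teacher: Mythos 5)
Your proof is correct; I checked the details and each step holds up: the flatness identification $(G/K)_p\cong G_p/K_p$, the exact sequence $0\to T_p/K_0\to G_p/K_p\to Y_p/\bar K\to 0$, the extension lemma (given $z=p^ax$ with $x\in p^bA$, writing $x+B=p^k(y+B)$ with $y\in p^bA+B$ and killing the difference by $p^a$ does yield $z=p^{a+k}y$, so $p^{a+b}A$ is divisible), and the DVR socle bound $\dim_{\mathbb F_p}(Y_p/\bar K)[p]\le n$ via lifting independence modulo $pS$. Your route differs from the paper's mainly in its bookkeeping. The paper first uses boundedness of $T_p$ to split the localization as a genuine direct sum $G_p=T_p\oplus F$ (a bounded pure subgroup is a summand), then cites the theory of finite-rank torsion-free $p$-local groups to conclude that $\pi(F)\cong F/(F\cap K)$ is divisible-plus-bounded, notes $\pi(T)$ is bounded, and closes with an unspecified ``elementary argument'' that the sum $\pi(F)+\pi(T)$ (which need not be direct) is again divisible-plus-bounded. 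You avoid the splitting entirely, working with the canonical extension by the image of the torsion, and you make explicit precisely the two points the paper leaves to citation or to the reader: the finite-socle fact for torsion quotients of finite-rank torsion-free $\mathbb Z_{(p)}$-modules (your divisible-of-finite-rank-plus-finite classification is exactly the paper's ``finite $p$-rank'' appeal), and the recombination step (your extension lemma plays the role of the paper's elementary argument about the sum). What your version buys is self-containedness and independence from any splitting; what the paper's version buys is brevity. The underlying mechanism --- bounded $p$-torsion together with a finite-rank torsion-free part forces each local quotient to be divisible-plus-bounded --- is the same in both.
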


\begin{proof} Let $\overline G=G/K$ and $\pi: G\to \overline G$ be the canonical epimorphism. There is clearly no loss of generality in assuming that $G$ and $K$ are $p$-local groups. It follows that there is a direct sum decomposition $G=T\oplus F$, where $F$ is torsion-free (and $p$-local). It readily follows from  the theory of finite rank torsion-free $p$-local groups that $\pi(F)\cong F/[F\cap K]$ is the direct sum of a divisible module and a bounded module (this follows since $F/(A\cap F)$ must have finite $p$-rank).  It is also obvious that $\pi(T)$ is bounded. An elementary argument then shows that $\overline G=\pi(F)+\pi(T)$ is also the direct sum of a divisible module and a bounded module.
\end{proof}

\medskip

We now restrict our attention to a particular subclass of the groups described in Theorem~\ref{decompose}. Recall once again that the group $G$ is B+E if $G=B\oplus E$, where $B$ is Bassian and $E$ is elementary. The following gives an alternate characterization of these groups.

\begin{proposition}\label{step}
A group $G$ is {\rm{B+E}} if, and only if, it has finite torsion-free rank and $pT_p$ is finite for every prime $p$.
\end{proposition}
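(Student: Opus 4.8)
The plan is to prove the two implications separately, with the converse carrying essentially all the content. For the forward direction, suppose $G = B \oplus E$ with $B$ Bassian and $E$ elementary. Since $E$ is torsion, $G$ and $B$ have equal torsion-free rank, which is finite because $B$ is Bassian. For each prime $p$ we have $T_p(G) = T_p(B) \oplus E_p$, and since $pE_p = 0$ this yields $pT_p(G) = pT_p(B)$; as $T_p(B)$ is finite (again because $B$ is Bassian, by the characterization recalled in the introduction), $pT_p(G)$ is finite. These two observations settle this direction.

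For the converse, the key technical ingredient is a splitting lemma for abelian $p$-groups: \emph{if $A$ is a $p$-group with $pA$ finite, then $A = C \oplus E$ with $C$ finite and $E$ elementary} (so, in particular, $A$ is bounded). I would prove this by noting that $x \mapsto px$ induces an isomorphism $A/A[p] \cong pA$, whence $A[p]$ has finite index in $A$ and $A = A[p] + C$ for some finitely generated, hence finite, subgroup $C$. Choosing a complement $E$ to $C \cap A[p]$ inside the $\mathbb F_p$-vector space $A[p]$, so that $A[p] = (C \cap A[p]) \oplus E$, a direct check gives $A = C + E$ and $C \cap E \subseteq E \cap (C \cap A[p]) = 0$; thus $A = C \oplus E$ with $E \subseteq A[p]$ elementary. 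Applying this to each $T_p$ shows at once that the hypothesis ``$pT_p$ finite for all $p$'' forces every $T_p$ to be bounded, so $G$ has bounded $p$-torsion.

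With bounded $p$-torsion and finite torsion-free rank in hand, I would invoke Theorem~\ref{decompose} to write $G = H \oplus S$ with $H$ Bassian and $S$ a torsion group with bounded $p$-torsion. For each prime $p$, $pS_p \subseteq pT_p$ is finite, so the splitting lemma applies to $S_p$ and gives $S_p = F_p \oplus E_p$ with $F_p$ finite and $E_p$ elementary. Setting $E = \bigoplus_p E_p$ and $B = H \oplus \bigl(\bigoplus_p F_p\bigr)$, one reassembles $G = B \oplus E$. Here $E$ is elementary by construction, while $B$ is Bassian because it shares the finite torsion-free rank of $H$ and $T_p(B) = T_p(H) \oplus F_p$ is a sum of two finite groups.

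I expect the splitting lemma, and its uniform use across all primes, to be the only point demanding genuine care: one must confirm that the finite pieces $F_p$ gather into a torsion group $\bigoplus_p F_p$ with each $p$-component finite (so that $B$ remains Bassian) while the pieces $E_p$, each killed by $p$, assemble into a genuinely elementary group $E$. Everything beyond this reduces to the structure already packaged in Theorem~\ref{decompose}.
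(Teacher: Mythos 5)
Your proof is correct and takes essentially the same route as the paper: the converse reduces to Theorem~\ref{decompose}, after which the torsion summand $S$ is split at each prime into a finite piece (absorbed into the Bassian summand $H$) and an elementary piece. The only cosmetic difference is that you prove the splitting $S_p = F_p \oplus E_p$ by hand via the finite-index subgroup $S_p[p]$, whereas the paper simply observes that the bounded group $S_p$, as a direct sum of cyclics with $pS_p$ finite, has only finitely many cyclic summands of order exceeding $p$.
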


\begin{proof} Certainly, the above two conditions are necessary for $G$ to be a B+E-group, so suppose $r_0(G)$ is finite and $pT_p$ is finite for all prime $p$. It follows that the hypotheses of Theorem~\ref{decompose} hold; so suppose $G=H\oplus S$ as in that result. Note that each $S_p$ has only a finite number of summands of the form $\mathbb Z_{p^k}$ for $k>1$. Transferring each of these from $S$ to $H$ gives the required decomposition into a Bassian group and an elementary group.
\end{proof}

As a direct consequence, we manifestly obtain that the class of B+E-groups is closed under taking subgroups.

\begin{corollary}\label{subgroup}
Every subgroup of a {\rm{B+E}}-group is also a {\rm{B+E}}-group.
\end{corollary}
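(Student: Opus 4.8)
The plan is to reduce the statement to the numerical characterization of B+E-groups furnished by Proposition~\ref{step}, and then observe that both of its defining conditions are inherited by subgroups. So let $G$ be a B+E-group and let $K$ be an arbitrary subgroup of $G$. By Proposition~\ref{step}, being B+E is equivalent to having finite torsion-free rank together with $pT_p$ finite for every prime $p$. Thus it suffices to verify these two properties for $K$: that $r_0(K)$ is finite, and that $pT_p(K)$ is finite for all primes $p$.

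The first is immediate from the monotonicity of torsion-free rank: any subset of $K$ that is $\mathbb Z$-linearly independent remains $\mathbb Z$-linearly independent in $G$, so $r_0(K)\leq r_0(G)$, and the right-hand side is finite because $G$ is B+E. For the second, I would use that torsion is computed componentwise, so that $T_p(K)=T_p(G)\cap K$ is a subgroup of $T_p(G)$. Multiplying by $p$ then yields the key containment $pT_p(K)\leq pT_p(G)$, and the latter group is finite by Proposition~\ref{step} applied to $G$. Since a subgroup of a finite group is finite, $pT_p(K)$ is finite as well. With both conditions established for $K$, a second application of Proposition~\ref{step}, now in the reverse direction, gives that $K$ is B+E, which completes the argument.

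I expect there to be no genuine obstacle here: all of the substantive content already resides in the characterization of Proposition~\ref{step}, and the corollary is a purely formal consequence of the fact that the two properties ``finite torsion-free rank'' and ``$pT_p$ finite'' both descend to subgroups. The only step that even warrants a remark is the identification $pT_p(K)\leq pT_p(G)$, since this is precisely what converts the finiteness of $pT_p(G)$ into the finiteness of $pT_p(K)$; everything else is standard and, as the surrounding text already signals, ``manifest.''
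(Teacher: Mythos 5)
Your proof is correct and follows exactly the paper's own argument: the paper likewise deduces the corollary from Proposition~\ref{step} by noting that finite torsion-free rank and finiteness of each $pT_p$ are inherited by subgroups. Your spelled-out verifications ($r_0(K)\leq r_0(G)$ and $pT_p(K)\leq pT_p(G)$) are just the details the paper leaves implicit.
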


\begin{proof}
The two conditions in Proposition~\ref{step} are clearly inherited by all subgroups.
\end{proof}

Again, the difference between the B+E-groups and those described in Theorem~\ref{decompose} is that, in the later result, we have that each $T_p$ is bounded, but if $G$ is B+E, we demand that each $pT_p$ is finite.

We know that if $G$ is generalized Bassian, then it has bounded $p$-torsion; in fact, for every prime $p$, $pT_p$ must be finite. The following statement, which is a positive answer to a question from \cite {CDG2}, shows that such a group must also have finite torsion-free rank.

\begin{theorem}\label{finite}
A generalized Bassian group $G$ has finite torsion-free rank.
\end{theorem}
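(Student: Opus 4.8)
The plan is to argue the contrapositive: assuming $r_0(G)$ is infinite, I will produce a subgroup $N$ that is \emph{not} a direct summand of $G$ together with an injection $G\hookrightarrow G/N$, contradicting the hypothesis that $G$ is generalized Bassian. Throughout I may use that, being generalized Bassian, $G$ has bounded $p$-torsion with each $pT_p$ finite (\cite{CDG2}, Lemma~3.1); this will matter only at the very end.

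The first reduction uses the characterization of Bassian groups from \cite{CDG1}: since $r_0(G)$ is infinite, $G$ is \emph{not} Bassian, so there exist a nonzero subgroup $N_0$ and an injection $\iota:G\hookrightarrow G/N_0$. If $N_0$ is not a summand, then $N_0$ is already the desired witness and we are done. Hence I may assume $N_0$ is a summand, say $G=N_0\oplus M$ with $N_0\ne 0$; composing $\iota$ with the projection isomorphism $G/N_0\cong M$ and the inclusion $M\hookrightarrow G$ yields an injective endomorphism $\theta:G\to G$ whose image lies in the proper summand $M$. The effect of this step is to replace the external task of exhibiting an injection by the purely internal task of locating a good non-summand.

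The main construction is then the following. I seek a non-summand $N\le N_0$ with $N\cap\theta(G)=0$; granting this, the composite $\pi_N\circ\theta:G\to G/N$ is injective, because $\theta(g)\in N\subseteq N_0$ together with $\theta(g)\in M$ forces $\theta(g)\in N_0\cap M=0$ and hence $g=0$, while $N$ is the required witness (a subgroup of $N_0$ is a summand of $G$ if and only if it is a summand of $N_0$, by the modular law). Such an $N$ is immediate whenever $N_0$ contains an element $a$ of infinite order or an element $b$ of order $p^2$: one takes $N=\langle pa\rangle$ or $N=\langle pb\rangle$, which is impure in $G$ and hence a non-summand, while $N\subseteq N_0$ guarantees $N\cap\theta(G)=0$. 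In particular this disposes of every torsion-free $G$ (and indeed of every indecomposable $G$, where $N_0$ is automatically a non-summand), and more generally of every case in which $N_0$ fails to be elementary.

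The remaining case, $N_0$ elementary, is the main obstacle: then every subgroup of $N_0$ is a summand and no impure $N$ lies inside $N_0$. To handle it one must exploit the ambient infinite torsion-free rank together with the finiteness of $pT_p$. After splitting off a single copy one reduces to $G=\langle a\rangle\oplus M$ with $a$ of order $p$ and $\theta(G)\subseteq M$; one then takes an infinite-order $x\in G$ and the impure non-summand $N=\langle px\rangle$, and builds an injection $G\hookrightarrow G/N$ directly, using the elementary part of $T_p$ to accommodate the order-$p$ element that $\bar x$ becomes in $G/N$, while the single unit of torsion-free rank lost in passing to $G/N$ is recovered from a self-embedding of the torsion-free quotient $G/T$ supplied, as before, by \cite{CDG1}. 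Carrying out these two adjustments simultaneously and compatibly—with the finiteness of $pT_p$ keeping the torsion from growing out of control—is the technical heart of the proof, and is where I expect the real work to lie.
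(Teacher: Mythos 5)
Your first reduction is sound: passing from non-Bassian-ness to a summand decomposition $G=N_0\oplus M$ with an injective $\theta:G\to G$ whose image lies in $M$, and then observing that any non-summand $N\subseteq N_0$ yields the injective composite $G\to G/N$, is correct, and your disposal of the case where $N_0$ is non-elementary (via the impure cyclic $\langle pa\rangle$ or $\langle pb\rangle$) works. But there is a genuine gap: the case where $N_0$ is elementary is exactly where the entire difficulty of the theorem lives, and you do not prove it — you sketch a plan and state that it "is where I expect the real work to lie." Worse, the sketch faces concrete obstacles. First, an embedding $G\hookrightarrow G/\langle px\rangle$ does not obviously exist: the torsion subgroup of $G/N$ with $N=\langle px\rangle$ is $N_*/N$, where $N_*$ is the purification of $N$ in $G$, and this is not merely $T$ plus one element of order $p$ — the rank-one piece $N_*/(T+N)$ can be infinite, even contain copies of $\mathbb{Z}_{q^\infty}$ if the coset of $x$ is $q$-divisible modulo $T$, so the quotient's torsion is not controlled by the finiteness of $pT_p(G)$ without a careful choice of $x$ that you do not make. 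Second, "recovering the lost rank from a self-embedding of $G/T$" does not lift: an embedding of $G/T$ into a quotient of itself gives no map on $G$ unless it is compatible with the extension class of $T$ by $G/T$, and there is no argument for such compatibility.

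The paper's proof avoids your case split entirely and runs in the opposite direction: rather than hunting for a non-summand kernel, it lets the generalized Bassian hypothesis manufacture an impossible decomposition. Using the finiteness of each $pT_p$, it writes $T_p=R_p\oplus S_p$ with $R_p$ finite and $S_p$ elementary, and arranges $G=A\oplus E$ with $E$ elementary and $|A|=\kappa=r_0(G)$; the infinitude of $\kappa$ is then used essentially to produce a surjection $A\to D$ onto the divisible hull of $A$ (a free quasi-essential subgroup of rank $\kappa$ maps onto $D$ since $|D|=\kappa$, and injectivity of $D$ extends this to $A$ — impossible at finite rank). With $K$ the kernel, $G/K\cong D\oplus E$ visibly contains a copy of $G=A\oplus E$, so the hypothesis forces $G\cong D\oplus E\oplus K$, giving $G$ an infinite-rank torsion-free divisible summand $Z$; a second application of the hypothesis with a cyclic $X\subseteq Z$ (the one step resembling your easy case: $Z$ embeds in $Z/X$, yet $X$ is impure, hence not a summand) yields the contradiction. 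So to complete your approach you would have to carry out, in the elementary case, a construction at least as delicate as the paper's; as it stands, the proposal proves the theorem only for groups admitting a non-elementary or non-summand witness $N_0$.
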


\begin{proof}
Suppose $\kappa:= r_0(G)$ is infinite. We can clearly find a subgroup $C\subseteq G$ such that $|C|= \kappa$ and $G=C+T$.  We know for each prime $p$ there is a decomposition $T_p=R_p\oplus S_p$, where $R_p$ is finite and $S_p$ is elementary. Let $A=C+({\oplus R_p})$. Note that $A$ has cardinality $\kappa$ and
$$
             {\oplus R_p}\subseteq (T\cap A)\subseteq T= ({\oplus R_p})\oplus ({\oplus S_p}).
$$
It follows that we can find $E\subseteq {\oplus S_p}$ such that $T=(T\cap A)\oplus E$. Therefore, $A\cap E\subseteq (T\cap A)\cap E=0$ and  $G=C+T\subseteq A+T=A+(T\cap A)+ E=A+E\subseteq G$; so we must have $G=A\oplus E$.

Let $F$ be a free subgroup of $A$ with $A/F$ is torsion. If $D$ is a divisible hull of $A$, then it easily follows that $r_0(F)=r_0(D)=|D|=\kappa $ and there is a surjective homomorphism $F\to D$, which we can extend to a surjective homomorphism $A\to D$. If $K$ is the kernel of this homomorphism, then $G/K\cong D\oplus E$. Since there is an obvious embedding $$G=A\oplus E\to D\oplus E\cong G/K$$ and the group $G$ is generalized Bassian, there must be a decomposition
$$
G\cong D\oplus E\oplus K.
$$
This obviously implies that $G$ has a summand that is a torsion-free divisible group of infinite rank, which cannot be the case (since if $X$ is a cyclic subgroup of such a summand $Z$, then $Z$ embeds in $Z/X$, so that $G$ clearly embeds in $G/X$; but $X$ is not a summand of $G$).
\end{proof}

The next consequence immediately follows from a combination of Theorem~\ref{finite} and Proposition~\ref{step}.

\begin{corollary}\label{oneway}
Every generalized Bassian group is \rm{B+E}.
\end{corollary}

We now pause for a couple of observations about summands. Suppose $G=A\oplus B$ and $\gamma:B\to A$ is any homomorphism. If we set $$B'=\{(\gamma(x),x): x\in B\},$$ then it is well known that $G=A\oplus B'$; so, in particular, that $B'$ is another summand of $G$. In addition, if $\pi:G=A\oplus B\to B$ is the obvious projection, then suppose $M$ is a subgroup of $G$ such that $\pi$ restricted to $M$ is injective and there is a decomposition $B=\pi(M)\oplus K$. Then clearly, $M\cap K=0$ and if $B'=M\oplus K$, then there is a decomposition $G=A\oplus B'$. Actually, this second observation follows from the first: Let $\tau: G\to A$ be the other projection and define $\gamma:B=\pi(M)\oplus K\to A$ to be 0 on $K$ and $\tau\circ \pi^{-1}$ on $\pi(M)$.

\medskip

Preparing for the proof of Theorem~\ref{allfinite}, suppose $G$ is a B+E-group,  $N$ is a subgroup of $G$ and there is an injection $G\to G/N$. Our injection shows $r_0(G)\leq r_0(G/N)\le r_0(G)$, which implies that $N$ has no elements of infinite order. So, $N\subseteq T$ and $T/N$ is the torsion subgroup of $G/N$.

For each prime $p$,
$$p(T/N)_p\cong [pT_p+N_p]/N_p\cong pT_p/[pT_p\cap N_p]$$ has at most as many elements as $pT_p$, which is finite. Therefore, the derived injection $pT_p\to p(T/N)_p$ must be an isomorphism. Thus, $N_p\cap pT_p=0$, i.e., $N_p$ is an elementary summand of $T_p$. In addition, the image of $T_p\to G/N$ must be a pure in $(T/N)_p$, and hence a summand. So, we have proved the following technicality.

\begin{lemma}\label{Nelem}
Suppose $G$ is a {\rm {B+E}}-group, $N$ is a subgroup of $G$ and $G\to G/N$ is an injective homomorphism. Then $N$ is an elementary group that is a summand of $T$. Further, if $\hat T$ is the image of $T$ under this injection, then $\hat T$ is a summand of $T/N$ such that, for all primes $p$, $p\hat T_p=p(T/N)_p$.
\end{lemma}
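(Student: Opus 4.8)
The plan is to establish the two conclusions of Lemma~\ref{Nelem} in sequence, exactly as the surrounding discussion sets them up. First I would verify that $N$ consists only of torsion elements. Since the injection $G\to G/N$ gives $r_0(G)\le r_0(G/N)$, and trivially $r_0(G/N)\le r_0(G)$, these quantities coincide; because $r_0(G/N)=r_0(G)-r_0(N)$ for finite torsion-free rank (which holds here, as $G$ is B+E and hence has finite torsion-free rank by Proposition~\ref{step}), we conclude $r_0(N)=0$, i.e.\ $N\subseteq T$. Consequently $T/N$ is precisely the torsion subgroup of $G/N$, and the restriction $T\to T/N$ of our injection is itself injective.

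Next I would show $N$ is elementary and a summand of $T$. Fix a prime $p$. Localizing, the injection restricts to an injection $T_p\to (T/N)_p$. The key computation is that
\[
p(T/N)_p\cong\bigl[pT_p+N_p\bigr]/N_p\cong pT_p/\bigl[pT_p\cap N_p\bigr],
\]
so $|p(T/N)_p|\le |pT_p|$, which is finite because $G$ is B+E. But our injection induces a monomorphism $pT_p\to p(T/N)_p$, forcing $|pT_p|\le|p(T/N)_p|$; combined with the previous inequality this map is an isomorphism, whence $pT_p\cap N_p=0$. Thus $N_p\cap pT_p=0$ means every nonzero element of $N_p$ has height $0$ and order $p$ (since if $x\in N_p$ had order $>p$ then $px\in pT_p\cap N_p$ would be a nonzero intersection element), so $N_p$ is $p$-bounded; as this holds for every $p$, $N=\oplus_p N_p$ is elementary. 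Since a bounded pure subgroup is a summand and $N_p$ is in fact a pure elementary subgroup of the bounded group $T_p$, each $N_p$ is a summand of $T_p$, so $N$ is a summand of $T$.

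Finally, for the statement about $\hat T$, I would identify $\hat T$ as the image of $T$ under the injection, a subgroup of $T/N$. For each prime $p$ the isomorphism $pT_p\to p(T/N)_p$ established above is exactly the assertion $p\hat T_p=p(T/N)_p$, since the left side is the image of $pT_p$ and equals all of $p(T/N)_p$. To see that $\hat T$ is a summand of $T/N$, I would argue that $\hat T_p$ is pure in $(T/N)_p$: purity reduces to checking heights, and the relation $p\hat T_p=p(T/N)_p$ together with the bounded structure of $(T/N)_p$ (which by Proposition~\ref{quotient}-type reasoning is elementary above its first layer) makes $\hat T_p$ a pure bounded, hence split, subgroup. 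The main obstacle I anticipate is the purity/summand verification for $\hat T$ in $(T/N)_p$: one must control not just the value of $p\hat T_p$ but the heights of elements of $\hat T_p$ relative to the possibly larger socle of $(T/N)_p$, and care is needed because the cokernel of $T_p\to (T/N)_p$ lives entirely in the socle. Establishing that this cokernel is a complementary elementary summand is where the real work lies, though it follows cleanly once the isomorphism on $p$-multiples is in hand.
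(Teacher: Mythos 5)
Your proof is correct and takes essentially the same route as the paper's: the torsion-free rank comparison forcing $N\subseteq T$, the computation $p(T/N)_p\cong [pT_p+N_p]/N_p\cong pT_p/[pT_p\cap N_p]$ combined with finiteness of $pT_p$ to force $pT_p\cap N_p=0$ (hence $N_p$ elementary, pure and a summand of $T_p$), and then the isomorphism on $p$-multiples to handle $\hat T$. The one ``obstacle'' you anticipate at the end is illusory: from $p\hat T_p=p(T/N)_p$ one gets $p^k\hat T_p=p^k(T/N)_p\subseteq \hat T_p$ for all $k\geq 1$, so purity of $\hat T_p$ in $(T/N)_p$ is immediate without any control of the socle, and a bounded pure subgroup is a summand.
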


The following result is key to showing that the B+E-groups in many classes satisfy Conjecture~\ref{conj}.

\begin{theorem}\label{allfinite}
Suppose $G=A\oplus E$ is a {\rm{B+E}}-group, where $E$ is elementary and every elementary summand of $A$ is finite. Then $G$ is generalized Bassian.
\end{theorem}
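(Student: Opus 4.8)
The plan is to fix a subgroup $N$ together with an injection $f\colon G\to G/N$ and to prove that $N$ is a direct summand of $G$. By Lemma~\ref{Nelem} I may assume from the outset that $N$ is an elementary group which is a direct summand of $T=T(G)$, and that $\hat T:=f(T)\cong T$ is a summand of $T/N$ with elementary complement $\bar C$. Write $T=T_A\oplus E$, where $T_A=T(A)$, and let $\pi_E\colon G\to E$ be the projection with kernel $A$. Since $A$ is Bassian, each $(T_A)_p$ is finite, so the torsion of $G$ differs from that of $E$ only by the ``small'' piece $T_A$; this is what I want to exploit.

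Granting for the moment that $N\cap A$ is \emph{finite}, the splitting is straightforward. Put $R=N\cap A$ and choose a complement $Q$ of $R$ in $N$; since $R=N\cap A=\ker(\pi_E|_N)$, the map $\pi_E|_Q\colon Q\to E$ is injective. As $\pi_E(Q)$ is a subgroup of the elementary group $E$ it is a summand, $E=\pi_E(Q)\oplus E'$, and the second of the two observations on summands preceding the statement yields $G=A\oplus(Q\oplus E')$, so that $Q$ is split off. It remains to split $R$ inside $A\oplus E'$; but $R$ is a finite subgroup of $A$ all of whose nonzero elements have order $p$ and $p$-height $0$ (purity, as $R\subseteq N$), hence is a finite pure bounded, and therefore direct, summand of $A$. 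Writing $A=R\oplus A^{\circ}$ gives $G=R\oplus A^{\circ}\oplus Q\oplus E'=N\oplus(A^{\circ}\oplus E')$, as required. Note that the hypothesis enters here only to make $R$ an \emph{allowable} (finite) elementary summand; the sizes of the individual $E_p$ play no role once $N\cap A$ is known to be finite.

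The crux is therefore to prove that $N\cap A$ is finite, and this is where both the injection $f$ and the assumption that $A$ has no infinite elementary summand are essential. Each $(N\cap A)_p$ is a summand of $(T_A)_p$, so $N\cap A$ is an elementary, ``locally split'' subgroup of $A$; were it also a \emph{global} summand of $A$ it would be finite by hypothesis. The difficulty is that such a subgroup need not be a summand of $A$ at all (for instance, in a rank-one pure subgroup of $\prod_p\mathbb{Z}/p$ the whole torsion is locally split yet not a summand), so finiteness cannot be read off from the internal structure of $A$ alone. My intended route is to feed the injection into the Bassian property of $A$: composing $f|_A\colon A\hookrightarrow G/N$ with a suitable projection exhibits a copy of $A$ inside $(A/(N\cap A))\oplus E$, and I expect the absence of infinite elementary summands in $A$, together with $A$ being Bassian (so that $A$ cannot embed in a proper torsion quotient of itself), to force the kernel that measures $N\cap A$ to be finite. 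A useful companion fact, itself immediate from Lemma~\ref{Nelem}, is that $N_p\neq 0$ implies $E_p/(N_p\cap E_p)$ is infinite: from $T_p/N_p\cong T_p\oplus\bar C_p$, the finiteness of $(T_A)_p$, and the cancellation $M\cong X\oplus M\Rightarrow X=0$ valid for finite $M$; this guarantees there is always ``room'' in $E$ to absorb $N$, and should be the ingredient that converts the embedding above into the desired finiteness. Pinning down this last implication cleanly is the main obstacle I anticipate.
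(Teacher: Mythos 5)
Your reduction and your endgame are sound and in fact coincide with the paper's: invoking Lemma~\ref{Nelem}, splitting $N=R\oplus Q$ with $R=N\cap A$, using the projection to move $Q$ into a re-chosen elementary complement, and then, once $R$ is known to be finite, observing that $R$ is a bounded height-zero (hence pure) subgroup of $A$, so a summand, which assembles into $G=N\oplus(A^{\circ}\oplus E')$. But the crux --- finiteness of $N\cap A$ --- is precisely the step you leave open, and the route you sketch does not close it. The Bassian property of $A$ cannot be applied directly, because the embedding $f|_A\colon A\to (A/N_A)\oplus(E/N_E)$ need not land inside $A/N_A$; the ``leak'' into $E/N_E$ is exactly the obstruction. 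Your companion fact (that $N_p\neq 0$ forces $E_p$ to be large) is true but only gives room prime by prime; it is fully compatible with the dangerous scenario in which each $(N_A)_p$ is finite (even of rank one) yet $N_A$ is nonzero at infinitely many primes, so that $N\cap A$ is infinite. Nothing in your sketch excludes this, and excluding it is where the hypothesis on $A$ must be used globally, across primes, not locally.

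The paper's proof supplies two ideas that are missing from your proposal. First, letting $\hat A\subseteq G/N$ be the image of $A$ and $\pi\colon G/N\to E/N_E$ the projection, it shows the set $\mathcal P'$ of primes with $\pi(\hat A[p])\neq 0$ is \emph{finite}: if not, choose $x_p\in \hat A[p]$ with $\pi(x_p)\neq 0$ and form $M=\oplus_{p\in\mathcal P'}\langle x_p\rangle$; then $\pi$ is injective on $M$ and $\pi(M)$, being a subgroup of the elementary group $E/N_E$, is a summand of it, so by the summand-exchange observations $M$ is a summand of $G/N$ and hence of $\hat A\cong A$ --- an infinite elementary summand of $A$, contradicting the hypothesis. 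Second, at the cofinitely many remaining primes one has $\hat A[p]\subseteq A/N_A$, and a count of zeroth Ulm invariants, $f_A^p(0)=f_{\hat A}^p(0)\leq f_{A/N_A}^p(0)=f_A^p(0)-r_p(N_A)$ (using purity of the torsion of $\hat A$ in $G/N$, that $N_A$ is a summand of $T$, and that $f_A^p(0)$ is finite since $A$ is Bassian), forces $(N_A)_p=0$ there; hence $N_A$ is finite. Neither the cross-prime assembly of $M$ nor the Ulm-invariant count follows from ``feeding the injection into the Bassian property of $A$,'' so as it stands your argument has a genuine gap at its central point.
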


\begin{proof} We observe first that $A$ must be Bassian: Certainly, since $G$ has finite torsion-free rank, so does $A$. Let $p$ be a prime; we need to show $T_{A,p}$ is finite. Since $A$ has no infinite elementary summands, we can conclude that $f_A^p(0)$ is finite. And since $p(T_{A,p})\subseteq  p(T_{G,p})$, which is finite, we can conclude that $p(T_{A,p})$ is finite. These facts readily imply that $T_{A,p}$ is finite, as required.

Suppose $N$ is a subgroup of $G$ such that there is an embedding $G\to G/N$; let $\hat G\subseteq G/N$ be the image of this embedding, $\hat T$ be the torsion subgroup of $\hat G$ and $\hat A\subseteq \hat G$ be the summand of $\hat G$ corresponding to $A\subseteq G$. So, the conclusions of Lemma~\ref{Nelem} must all hold.

We know that  $N$ is a summand of $T$ (we need to show it is a summand of $G$).  Since $N$ is elementary, there is a decomposition $N=N_A\oplus N'$ where $N_A=N\cap A$. Since $A$ is Bassian, for each prime $p$, $N_{A,p}$ is finite.

On $N'$, the projection $G=A\oplus E\to E$ is injective and its image is a summand of $E$. So there is a decomposition $G=A\oplus E'$, where $N'\subseteq E'$. Replacing $E$ by $E'$ and letting $N_E=N'$, we may assume there are decompositions $G=A\oplus E$ and $N=N_A\oplus N_E$, where $N_A\subseteq A$ and $N_E\subseteq E$.

There is a natural decomposition $G/N\cong (A/N_A)\oplus (E/N_E)$; let $\pi:G/N\to E/{N_E}$ be the obvious projection. Let $\mathcal P'$ be the set of primes such that $\pi(\hat A[p])\ne 0$. We claim that $\mathcal P'$ is actually finite. Otherwise, for each $p\in \mathcal P'$, let $x_p\in \hat A[p]$ with $\pi(x_p)\ne 0$.  If $M=\oplus_{p\in \mathcal P'}\langle x_p\rangle\subseteq \hat A$, then $\pi$ is an injection on $M$ and $\pi(M)$ is a summand of the elementary group $E/N_E$. Therefore, $M$ will be a summand of $G/N$, and hence of $\hat A$. This, however, contradicts that $A$ has no infinite elementary summands.

Let $\mathcal P$ be the cofinite collection of primes not in $\mathcal P'$. It follows that for all $p\in \mathcal P$ that $\hat A[p]\subseteq A/{N_A}$.  So for such $p\in \mathcal P$ we have
$$f_A^p(0)=f_{\hat A}^p(0)\leq f_{A/{N_A}}^p(0)=f_{A}^p(0)-r_p({N_A});$$
here the first equality is obvious, the next inequality is due to the fact that the torsion part of $\hat A$ is pure in $G/N$ and the last equality is due to the fact that $N_A$ is a summand of $T$. Consequently, for all $p\in \mathcal P$, $(N_A)_p=0$, so that ${N_A}$ is finite. Therefore, ${N_A}$ is a summand of $A$ and ${N_E}$ is a summand of $E$, so that $N$ is a summand of $G$, as required.
\end{proof}

We want to generalize Theorem~\ref{allfinite} by replacing the elementary group $E$ with any torsion group.  Here is the critical step in that program.

\begin{proposition}\label{frosty}
Suppose $A$ is a group that has no infinite elementary summands and $S$ is a torsion group with no elementary summands. Then $A\oplus S$ has no infinite elementary summands.
\end{proposition}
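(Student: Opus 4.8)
The plan is to argue by contradiction: assume $G=A\oplus S$ has an infinite elementary summand $E$, and then manufacture an infinite elementary summand of $A$ itself, contradicting the hypothesis on $A$. Write $\pi_A\colon G\to A$ and $\pi_S\colon G\to S$ for the projections attached to $G=A\oplus S$, fix a retraction $r\colon G\to E$ (so $G=E\oplus\ker r$), and set $B:=\pi_A(E)\subseteq A$. The first step is to show $E\cap S=0$, which will force $B\cong E$ to be infinite and elementary. Indeed, each $E_p$ is a $p$-bounded summand of $G$, hence pure, so $E_p\cap pG=0$ and every nonzero element of $E_p$ has $p$-height $0$ in $G$, and therefore $p$-height $0$ in the subgroup $S$ as well. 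Thus $E\cap S$ is a pure elementary subgroup of the torsion group $S$; prime-by-prime it is a pure bounded subgroup, hence a summand of $S$. Since $S$ has no nonzero elementary summand, $E\cap S=0$, so $\pi_A$ restricts to an isomorphism $E\to B$.

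I would next isolate what the genuine obstacle is, because it explains why an invariant-theoretic argument cannot work. One \emph{cannot} finish merely by observing that $B$ is a pure infinite elementary subgroup of $A$: a pure infinite elementary subgroup need not be a summand. For example, in a rank-one non-splitting PSP-group the torsion subgroup $\oplus_p\mathbb{Z}/p$ is pure, infinite and elementary, yet is not a summand; equivalently, the condition ``no infinite elementary summand'' is \emph{not} detected by the invariants $f_A^p(0)$ (one can have $f_A^p(0)=1$ for every prime with no infinite elementary summand at all). So the hypothesis on $S$ must be brought in precisely at the point of actually splitting $B$ off of $A$, rather than through any counting of socles.

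The key step, which resolves this, exploits that $S$ has no elementary summand, i.e. $f_S^p(0)=0$, equivalently $S_p[p]\subseteq pS_p$ for every prime $p$. Given $e\in E$, the element $\pi_S(e)$ is killed by a squarefree integer (as $pE_p=0$), so for each prime $p$ its $p$-component lies in $S_p[p]\subseteq pS_p\subseteq pS$; applying the homomorphism $r$, the image lies in $pE$ and is killed by $p$, hence lies in $E_p\cap pE=0$. Summing over the finitely many primes in the support of $e$ yields $r(\pi_S(e))=0$. Consequently, writing $b=\pi_A(e)=e-\pi_S(e)$, we get $r(b)=e$, whence $\pi_A(r(b))=\pi_A(e)=b$. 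Thus $\pi_A\circ r|_A\colon A\to B$ is a retraction, so $B$ is a summand of $A$; being isomorphic to $E$ it is infinite and elementary, giving the desired contradiction.

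I expect the crux to be exactly this last computation — verifying that the retraction $r$ annihilates $\pi_S(E)$ — since that is where the ``deep socle'' condition $S_p[p]\subseteq pS_p$ on $S$ is converted into honest splitting information, bypassing the failure of the naive purity argument. The two preliminary facts I would lean on (that a bounded pure subgroup of a torsion group is a summand, and that a summand inherits heights from the ambient group) are standard, so the only delicate bookkeeping is keeping the prime-by-prime torsion orders straight in the final step.
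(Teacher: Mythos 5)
Your proof is correct and takes essentially the same route as the paper's: both use the socle condition $S[p]\subseteq pS$ (no elementary summands of $S$) together with $E_p\cap pE=0$ to show that $\pi_S(E)$ is killed by the retraction onto $E$ (equivalently, $\pi_S(E)$ lies in a complement $H$ of $E$), and then transfer $E$ to the isomorphic summand $\pi_A(E)\subseteq A$ — the paper via its graph/replacement observation $E'=\{x-\pi_S(x):x\in E\}$, you via the explicit idempotent $\pi_A\circ r$. The only difference is cosmetic: your preliminary step proving $E\cap S=0$ is actually redundant, since your identity $r(\pi_A(e))=e$ already shows $\pi_A|_E$ is injective.
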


\begin{proof}
Suppose $E$ is an elementary summand of $A\oplus S$, say $A\oplus S=E\oplus H$; we must show that $E$ is finite.
Let $\pi: A\oplus S\to S$ be the obvious projection. For any prime $p$, we have $$S[p]=(pS)[p]\subseteq p(E_p\oplus H)\subseteq H.$$ Therefore, $\pi(E)\subseteq H$. It follows that $$E':=\{x-\pi(x):x\in E\}\cong E$$ is also a summand of $A\oplus S$. Since $E'\subseteq A$, we deduce that $E'$ is also a summand of $A$. So, by hypothesis, $E'$ must be finite, which implies that $E$ is finite, as required.
\end{proof}

The following is a slight generalization of Theorem~\ref{allfinite}.

\begin{corollary}\label{first}
Suppose $G$ is a {\rm{B+E}}-group with a decomposition $A\oplus S$, where $S$ is torsion. If $A$ has no infinite elementary summands, then $G$ is generalized-Bassian.
\end{corollary}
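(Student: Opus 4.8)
The plan is to reduce the statement to Theorem~\ref{allfinite} by splitting off the elementary part of the torsion summand $S$. Since $G$ is B+E, it has bounded $p$-torsion by Proposition~\ref{step}, and as $S$ is a torsion summand of $G$ each $S_p$ is bounded, hence a direct sum of cyclic groups. First I would, for each prime $p$, collect the cyclic summands of $S_p$ of order $p$ into a group $E_p$ and those of order at least $p^2$ into a group $S_p'$, so that $S_p=E_p\oplus S_p'$. Setting $E'=\oplus_p E_p$ and $S'=\oplus_p S_p'$ yields a decomposition $S=E'\oplus S'$ in which $E'$ is elementary.

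The key claim is that $S'$ has \emph{no} nonzero elementary summand. Since summands of a torsion group decompose along primes, it suffices to check that each $S_p'$ has no nonzero $\mathbb Z_p$-summand. But $S_p'$ is a direct sum of cyclic groups of order at least $p^2$, so every generator of its socle has the form $p^{k-1}g$ with $k\geq 2$; hence $S_p'[p]\subseteq pS_p'$. A group whose socle lies in $p$ times the group can have no cyclic summand of order $p$ (such a summand would contribute a socle element of $p$-height $0$), so $S_p'$ has no elementary summand, as claimed.

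With this in hand the proof concludes quickly. Rewriting $G=A\oplus S=(A\oplus S')\oplus E'$, I would apply Proposition~\ref{frosty}: since $A$ has no infinite elementary summands and $S'$ is torsion with no elementary summands, the group $A\oplus S'$ has no infinite elementary summands. Thus $G=(A\oplus S')\oplus E'$ is a B+E-group expressed as the direct sum of the elementary group $E'$ and the group $A\oplus S'$, every elementary summand of which is finite. Theorem~\ref{allfinite} now applies directly and yields that $G$ is generalized Bassian.

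The only step requiring genuine care is the splitting $S=E'\oplus S'$ together with the verification that $S'$ has no elementary summands; everything else is a formal application of the two cited results. I expect the main (mild) obstacle to be confirming that this splitting behaves correctly across all primes simultaneously, which is precisely where the boundedness of each $S_p$ — guaranteed by the B+E hypothesis — is essential, since without it $S_p$ need not be a direct sum of cyclics and the clean separation of the elementary part could fail.
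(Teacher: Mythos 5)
Your proof is correct and takes essentially the same route as the paper's: the paper likewise writes $S=S_1\oplus E$ with $E$ elementary and $S_1$ having no elementary summand, then applies Proposition~\ref{frosty} to $A\oplus S_1$ and concludes via Theorem~\ref{allfinite}, exactly as you do. Your explicit construction of that splitting from the cyclic decompositions of the bounded groups $S_p$, with the verification $S_p'[p]\subseteq pS_p'$, simply fills in a detail the paper leaves implicit.
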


\begin{proof} As in Theorem~\ref{allfinite}, we can conclude that $A$ is Bassian.

Suppose $S=S_1\oplus E$, where $E$ is elementary and $S_1$ has no elementary summand. With Proposition~\ref{frosty} at hand, we have $A_1=A\oplus S_1$ also has no infinite elementary summand. So, by Theorem~\ref{allfinite}, the group $G=A_1\oplus E$ is generalized Bassian, as expected.
\end{proof}

\section{Tight subgroups of Bassian groups}\label{C}

The following illustrates how we can use tight subgroups to verify that certain groups satisfy the hypotheses of Theorem~\ref{allfinite}, so that Conjecture~\ref{conj} holds in this case.

\begin{proposition}
Suppose $A$ is a Bassian group with a tight subgroup $F$. Then any torsion summand of $A$ is finite.
\end{proposition}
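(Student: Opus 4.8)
The plan is to combine the characterization of Bassian groups with the finiteness of the rank of $F$ in order to confine the torsion summand to finitely many primes, and then to let tightness kill it everywhere else. By the Main Theorem of \cite{CDG1}, since $A$ is Bassian it has finite torsion-free rank, say $r_0(A)=n$, and every $T_p$ is finite. Consequently any torsion summand $S$ of $A$ decomposes as $S=\oplus_p S_p$ with each $S_p\subseteq T_p$ finite, so it suffices to show that $S_p=0$ for all but finitely many primes $p$. Write $A=S\oplus A'$ and let $\rho\colon A\to S$ be the associated projection. Because $F$ is quasi-essential in $A$ we have $r_0(F)=r_0(A)=n$, so $F$ is free of finite rank $n$; hence $\rho(F)$ is a finitely generated subgroup of the torsion group $S$, and is therefore finite. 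Thus there is a finite set of primes $P_0$ with $\rho(F)\subseteq \oplus_{p\in P_0}S_p$.

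The next step is to feed this into the tightness hypothesis. For each prime $p$ and each $j<\omega$, tightness gives an isomorphism $U^p_F(j)\to U^p_A(j)$, while the decomposition $A=S\oplus A'$ yields $U^p_A(j)=U^p_S(j)\oplus U^p_{A'}(j)$, the projection onto the first factor being the map induced by $\rho$. Composing, we obtain a \emph{surjection} $U^p_F(j)\to U^p_S(j)$ which carries the class of an element $x\in F$ (subject to the defining height constraints) to the class of $\rho(x)$. For a finite ordinal $j$ the invariant $U^p_S(j)$ depends only on $S_p$, since each $S_q$ with $q\ne p$ is $p$-divisible and hence contributes nothing. Therefore, for any prime $p\notin P_0$ the $S_p$-coordinate of $\rho(x)\in\rho(F)$ vanishes, so the composite $U^p_F(j)\to U^p_S(j)$ is the zero map; being also surjective, it forces $U^p_S(j)=0$ for every $j<\omega$. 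As $S_p$ is finite, all of its Ulm invariants occur at finite heights, whence $S_p=0$ for every $p\notin P_0$.

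Putting these together, $S=\oplus_{p\in P_0}S_p$ is a finite direct sum of finite groups, hence finite, as claimed.

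The routine points—additivity of the Ulm invariants over the splitting $A=S\oplus A'$, and the fact that a finite $p$-group has all its invariants concentrated at finite heights—are standard. The one step deserving care, and the real crux of the argument, is the identification of the composite $U^p_F(j)\to U^p_A(j)\to U^p_S(j)$ as the map \emph{induced by} $\rho$: this is exactly what allows the finiteness of $\rho(F)$ to interact with tightness, and it is here that one must verify that the representative $\rho(x)$ has vanishing $S_p$-component off $P_0$, so that the induced surjection collapses to zero. (Note that the niceness of $F$ is not needed for this particular argument.)
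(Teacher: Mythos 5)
Your proof is correct and is essentially the paper's own argument: the paper likewise splits $A=A'\oplus S$, uses the finite rank of the quasi-essential free subgroup $F$ to confine its interaction with $S$ to finitely many primes (it passes to $F'=F\cap A'$, noting that $F/F'\cong \rho(F)$ is finite so that $F_p=F'_p$ for almost all $p$), and then invokes tightness to force $f^p_S(j)=0$ for all $j<\omega$ at the remaining primes, whence $S_p=0$ there. Your map-theoretic bookkeeping via the surjection $U^p_F(j)\to U^p_S(j)$ induced by $\rho$ is just a rephrasing of the paper's numerical chain $f_{A'}^p(j)\geq f_{F'}^p(j)=f_{F}^p(j)=f_A^p(j)\geq f_{A'}^p(j)$, so the two proofs differ only cosmetically.
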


\begin{proof}
Suppose $S$ is such a torsion summand of $A$; since each $S_p$ is finite, it will suffice to show that $S_p=0$ for almost all primes $p$.  Suppose $A=A'\oplus S$ and $F'=F\cap A'$; so clearly $F'$ is still free and $A/F'$ is torsion. Now, for almost all primes $p$, the localizations $F_p$ and $F_p'$ will necessarily coincide. So, for almost all primes $p$, we have $$f_{A'}^p(j)\geq f_{F_p'}^p(j)=f_{F_p}^p(j)=f_A^p(j)\geq f_{A'}^p(j),$$ for all $j<\omega$. Therefore, for almost all $p$, $f_{A}^p(j)=f_{A'}^p(j)$ for all $j$, so that $S_p=0$, as required.
\end{proof}

The same proof implies the following consequence.

\begin{corollary}\label{suffer}
Suppose $A$ is a Bassian group with a 0-tight subgroup $F$. Then any elementary summand of $A$ is finite.
\end{corollary}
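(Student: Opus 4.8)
The plan is to run the argument of the preceding Proposition essentially verbatim, but restricting attention to the single Ulm index $j=0$. This restriction suffices precisely because an elementary group has no Ulm invariants above level $0$. Write $A=A'\oplus E$ with $E$ elementary. Since $A$ is Bassian, each $T_p$ is finite, so each $E_p\subseteq T_p$ is finite; hence it is enough to prove that $E_p=0$ for all but finitely many primes $p$.

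Next I would set $F'=F\cap A'$. Because $F$ is a quasi-essential free subgroup, $A/F$ is torsion, and the projection $A=A'\oplus E\to E$ carries $F$ onto a torsion group with kernel $F'$, so $F/F'$ is torsion and thus $A/F'$ is torsion as well. Crucially, as $A$ is Bassian it has finite torsion-free rank, so $F$ is free of finite rank; since $A/F'$ is torsion, $F'$ has full rank and is therefore free of the same finite rank as $F$. Consequently $F/F'$ is a finitely generated torsion group, hence finite, which forces $F_p=F'_p$ for all but finitely many $p$. It is exactly the finiteness of the torsion-free rank supplied by the Bassian hypothesis that makes this step go through.

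For each such prime $p$ I would then assemble the chain of (in)equalities at level $0$:
$$f_{A'}^p(0)\geq f_{F'_p}^p(0)=f_{F_p}^p(0)=f_A^p(0)\geq f_{A'}^p(0),$$
where the first inequality comes from the natural injection $U^p_{F'}(0)\to U^p_{A'}(0)$ afforded by the realization, the first equality uses $F'_p=F_p$, the middle equality is precisely the $0$-tightness of $F$ in $A$, and the last inequality follows from $A=A'\oplus E$. Hence all the terms coincide, so $f_E^p(0)=f_A^p(0)-f_{A'}^p(0)=0$.

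Finally, since $E$ is elementary, $E_p$ is $p$-bounded and its only possibly nonzero Ulm invariant is $f_E^p(0)$; thus $f_E^p(0)=0$ yields $E_p=0$ for almost all $p$. Combined with the finiteness of each $E_p$, this shows that $E$ is finite. The only genuinely new point beyond the tight case is the observation that for elementary $E$ all the information is concentrated at $j=0$, so the weaker $0$-tightness hypothesis—which controls Ulm invariants only at level $0$—already suffices; I do not anticipate any real obstacle beyond recording this reduction carefully.
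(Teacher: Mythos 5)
Your proposal is correct and follows essentially the same route as the paper, which proves this corollary simply by observing that the proof of the preceding Proposition goes through verbatim at the single level $j=0$, where $0$-tightness supplies the middle equality in the chain $f_{A'}^p(0)\geq f_{F'}^p(0)=f_{F}^p(0)=f_A^p(0)\geq f_{A'}^p(0)$. Your additional bookkeeping (finiteness of $F/F'$ forcing $F_p=F'_p$ for almost all $p$, and the fact that an elementary group has all its Ulm data concentrated at level $0$, so $f_E^p(0)=0$ gives $E_p=0$) correctly fills in the details the paper leaves implicit.
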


\begin{corollary}\label{second}
If $G$ is a {\rm{B+E}}-group with a decomposition $A\oplus S$, where $S$ is torsion and $A$ has a 0-tight subgroup, then $G$ is generalized-Bassian.
\end{corollary}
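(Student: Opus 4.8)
The plan is to reduce everything to Corollary~\ref{first}: once we know that $A$ has no infinite elementary summand, the hypotheses of that corollary are met (namely $G$ is B+E and $G=A\oplus S$ with $S$ torsion), and we are finished. The natural tool for producing ``no infinite elementary summand'' out of a 0-tight subgroup is Corollary~\ref{suffer}. The catch is that Corollary~\ref{suffer} requires $A$ to be \emph{Bassian}, whereas a priori $A$ is only a summand of the B+E-group $G$, hence merely B+E (by Corollary~\ref{subgroup}), and a B+E-group may perfectly well carry an infinite elementary summand. So the real content is to upgrade $A$ from B+E to Bassian by exploiting the 0-tight subgroup.

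The key estimate concerns the zeroth Ulm invariants. Let $F$ be the 0-tight subgroup of $A$. Since $G$ is B+E, Proposition~\ref{step} gives that $r_0(G)$ is finite, so $r_0(A)\le r_0(G)$ is finite; and because $F$ is free, nice, with $A/F$ torsion, we have $r_0(F)=r_0(A)$, which is finite. I would next observe that for such a free $F$ and any prime $p$, the group $U_F^p(0)$ is a subquotient of $F/pF$: indeed $pF\subseteq F(1)$, so $U_F^p(0)$ sits inside $F(0)/F(1)=F/F(1)$, which is a quotient of $F/pF\cong(\mathbb Z/p)^{r_0(F)}$. Hence $f_F^p(0)\le r_0(F)$ for every prime $p$. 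Now 0-tightness says precisely that $U_F^p(0)\to U_A^p(0)$ is an isomorphism, so $f_A^p(0)=f_F^p(0)\le r_0(A)$ is finite for every $p$.

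With $f_A^p(0)$ finite for all $p$, no infinite elementary summand can occur, since an infinite elementary $p$-summand would force $f_A^p(0)=\infty$. (Combined with the fact that each $pT_{A,p}$ is finite, inherited from $G$ being B+E, this in fact forces each $T_{A,p}$ to be finite, so that $A$ is Bassian; thus Corollary~\ref{suffer} becomes literally applicable and recovers the same conclusion.) Finally I would invoke Corollary~\ref{first} with the decomposition $G=A\oplus S$ to conclude that $G$ is generalized Bassian.

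I expect the main obstacle to be exactly the dependency flagged in the first paragraph: one cannot simply quote Corollary~\ref{suffer}, because its Bassian hypothesis is not given but is itself part of what must be established. The way around it is the finite-rank bound $f_F^p(0)\le r_0(F)$, which converts 0-tightness into a uniform finiteness statement for the $f_A^p(0)$ and thereby delivers both ``no infinite elementary summand'' and the Bassian property of $A$ at one stroke. Verifying that $U_F^p(0)$ is a subquotient of $F/pF$ (and hence the rank bound) is the only genuinely computational point, and it is routine.
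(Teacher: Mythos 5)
Your proposal is, in substance, exactly the paper's proof: the paper argues that the 0-tight subgroup forces $f_A^p(0)$ to be finite for all primes $p$ (your rank bound $f_F^p(0)\le r_0(F)$ is precisely the detail the paper dismisses with ``clearly''), that together with the finiteness of each $pT_p$ this makes $A$ Bassian, and then quotes Corollary~\ref{suffer} followed by Corollary~\ref{first}. So your main route, including the worry about upgrading $A$ from B+E to Bassian before Corollary~\ref{suffer} can be cited, matches the paper line for line, and the proof you give is complete.

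One sentence of yours, however, is genuinely false and you should strike it: the claim that once $f_A^p(0)$ is finite for all $p$, ``no infinite elementary summand can occur, since an infinite elementary $p$-summand would force $f_A^p(0)=\infty$.'' An infinite elementary summand need not be concentrated at a single prime: the group $\oplus_{p\in P}\,\mathbb{Z}/p\mathbb{Z}$, for $P$ an infinite set of primes, is infinite elementary yet has $f^p(0)\le 1$ for every $p$. Finiteness of each individual $f_A^p(0)$ therefore cannot by itself rule out infinite elementary summands; the danger is always the spread over infinitely many primes. This is exactly why Corollary~\ref{suffer} is not a redundant alternative but the indispensable step --- its proof (via the proposition on tight subgroups) uses the 0-tight subgroup to show the elementary summand vanishes at \emph{almost all} primes, through the observation that the localizations $F_p$ and $F'_p$ coincide for almost all $p$. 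Your parenthetical remark (each $T_{A,p}$ finite, hence $A$ Bassian, hence Corollary~\ref{suffer} applies) is thus not an optional double-check recovering ``the same conclusion''; it is the only correct argument in your write-up for the no-infinite-elementary-summand claim, and with the flawed sentence deleted your proof coincides with the paper's.
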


\begin{proof} The existence of a 0-tight subgroup clearly implies that $f_A^p(0)$ is finite for all primes. And since each  $pA_p$ must clearly be finite, we can conclude that $A$ must be Bassian. By virtue of Corollary~\ref{suffer}, $A$ has no infinite elementary summands, so the result follows from Corollary~\ref{first}.
\end{proof}

We now observe that our conjecture holds for the class of Warfield groups, and in particular, the balanced-projective groups.

\begin{corollary}
Suppose $G$ is a Warfield group. Then $G$ is generalized Bassian if, and only if, it is a {\rm{B+E}}-group.
\end{corollary}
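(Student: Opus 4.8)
The plan is to dispatch the two implications separately, leaning almost entirely on machinery already assembled. The forward direction, that a generalized Bassian group is \textrm{B+E}, is precisely Corollary~\ref{oneway}, so no work is required there; the entire content lies in the converse.

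For the converse, suppose $G$ is a Warfield group that happens to be \textrm{B+E}. The first thing I would record is that $G$ has bounded $p$-torsion. Writing $G=B\oplus E$ with $B$ Bassian and $E$ elementary, each $T_{B,p}$ is finite and each $T_{E,p}$ is $p$-bounded, so each $T_p=T_{B,p}\oplus T_{E,p}$ is bounded; equivalently, this is immediate from the criterion in Proposition~\ref{step}. This is exactly the hypothesis needed to bring the structure theory of Section~\ref{A} to bear. Since $G$ is Warfield, it possesses a decomposition subgroup $F$, and feeding $F$ into Proposition~\ref{flag} produces a decomposition $G=H\oplus S$ in which $S$ is torsion and $H$ is a nice tight simply presented realization of $F$.

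The pivotal observation is then that tightness of the realization means $F$ is a tight subgroup of $H$, hence a fortiori a $0$-tight subgroup of $H$. With this in hand, everything is in place to invoke Corollary~\ref{second}: we have a \textrm{B+E}-group $G$ with a decomposition $A\oplus S$ in which $S$ is torsion and $A=H$ carries a $0$-tight subgroup. Corollary~\ref{second} then delivers at once that $G$ is generalized Bassian, completing the equivalence.

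As for obstacles: there is essentially none beyond correctly identifying which prior results to chain together, since the genuine labor (building the tight realization in Proposition~\ref{flag}, and propagating the ``no infinite elementary summand'' property through Corollaries~\ref{suffer}, \ref{first} and~\ref{second}) has already been carried out upstream. The only point deserving a moment's care is confirming that being \textrm{B+E} forces bounded $p$-torsion, so that Proposition~\ref{flag} is legitimately applicable to $G$; this is a short check as indicated above.
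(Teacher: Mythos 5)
Your proposal is correct and follows exactly the paper's route: the paper's proof is precisely the combination of Proposition~\ref{flag} (applied via the observation that a B+E-group has bounded $p$-torsion) with Corollary~\ref{second}, the forward direction being Corollary~\ref{oneway}. You have merely spelled out the details (tight implies $0$-tight, and the bounded $p$-torsion check) that the paper leaves implicit.
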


\begin{proof} Follows from a combination of Corollary~\ref{second} and Proposition~\ref{flag}.
\end{proof}

For groups of finite torsion-free rank and bounded $p$-torsion, the following statement describes when they have free quasi-essential subgroups that are {\it nice.}

\begin{proposition}\label{nice}
Suppose $G$ is a group with finite torsion-free rank and bounded $p$-torsion. Then the following are equivalent:

(a) Every free quasi-essential subgroup $F$ is nice in $G$;

(b) Some free quasi-essential subgroup $F$ is nice in $G$;

(c) For every prime $p$, the quotient $G_p/T_p$ is the direct sum of a free local module and a divisible module.
\end{proposition}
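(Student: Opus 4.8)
The plan is to prove the cycle (a)$\Rightarrow$(b)$\Rightarrow$(c)$\Rightarrow$(a). The implication (a)$\Rightarrow$(b) is immediate once we note that free quasi-essential subgroups exist at all: choosing a maximal independent set $x_1,\dots,x_n$ of torsion-free elements (with $n=r_0(G)<\infty$), the subgroup $F=\langle x_1,\dots,x_n\rangle\cong\mathbb Z^n$ is free with $G/F$ torsion. Before attacking the two substantial implications I would first localize and trim the statement. Niceness is a local condition, so $F$ is nice in $G$ iff $F_p$ is nice in $G_p$ for every prime $p$; thus I may fix $p$ and work inside $G_p=T_p\oplus Y_p$, where $T_p$ is bounded (a pure bounded, hence split, subgroup) and $Y_p\cong G_p/T_p$ is torsion-free of finite rank. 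Since every $p$-height in $G$ is finite or $\infty$ (as observed in Section~\ref{A}), the height filtration of $G_p$ stabilizes at $\omega$; niceness of a subgroup is automatic at finite ordinals, so the only condition to verify is the one at $\omega$, namely $\bigcap_{k<\omega}(F_p+p^kG_p)=F_p+p^\omega G_p$, with $p^\omega G_p$ equal to the divisible part of $Y_p$ (a $\mathbb Q$-vector space). Writing $Y_p=R_p\oplus D_p$ with $R_p$ reduced and $D_p$ divisible, the whole problem reduces to a question about a full-rank free submodule $\bar L$ of the reduced finite-rank torsion-free $\mathbb Z_{(p)}$-module $R_p$.

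For (b)$\Rightarrow$(c), suppose $F$ is nice. Since $(F_p+T_p)/F_p\cong T_p$ is bounded, it is nice in $G_p/F_p$; as $F_p$ is nice in $G_p$, transitivity of niceness gives that $F_p+T_p=T_p\oplus L$ is nice in $G_p$, where $L=\pi(F_p)$ is the projection of $F_p$ into $Y_p$. Cancelling the common summand $T_p$ and then factoring out the divisible part $D_p$ shows that $\bar L$ is nice in $R_p$. The heart of the argument is then the following Key Lemma: \emph{if a full-rank free submodule $\bar L$ is nice in a reduced torsion-free $\mathbb Z_{(p)}$-module $R_p$ of finite rank $r$, then $R_p$ is free.} To prove it I would observe that the socle $(R_p/\bar L)[p]$ injects into $\bar L/p\bar L\cong(\mathbb Z/p)^r$ via $x+\bar L\mapsto px+p\bar L$ (injectivity uses torsion-freeness of $R_p$), so the torsion group $R_p/\bar L$ has finite rank $\le r$, whence $R_p/\bar L\cong(\text{finite})\oplus\mathbb Z_{p^\infty}^{\,d}$. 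Niceness forces $p^\omega(R_p/\bar L)=0$ (as $R_p$ is reduced), killing the divisible part, so $R_p/\bar L$ is finite, $R_p$ is finitely generated, and therefore free. This yields $Y_p=R_p\oplus D_p\cong\mathbb Z_{(p)}^{\,m}\oplus\mathbb Q^{\,d}$, which is condition (c).

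For (c)$\Rightarrow$(a), let $F$ be an arbitrary free quasi-essential subgroup; I must verify $\bigcap_{k}(F_p+p^kG_p)\subseteq F_p+D_p$ for every $p$. Assuming (c), write $G_p=T_p\oplus R_p\oplus D_p$ with $R_p$ free of finite rank and $D_p$ divisible, and let $\beta\colon G_p\to G_p/D_p=T_p\oplus R_p$. Given $x$ in the left-hand intersection, one gets $\beta(x)-\beta(f_k)\in p^kR_p$ for large $k$ (using boundedness of $T_p$), so $\beta(x)$ is a $p$-adic limit of elements of $\beta(F_p)$ with eventually constant $T_p$-coordinate. The point is that the relevant submodule $W=\beta(F_p)\cap R_p$ is finitely generated (as $\mathbb Z_{(p)}$ is Noetherian), hence $p$-adically closed in $R_p$, since $R_p/W$ is a finitely generated, and so reduced, $\mathbb Z_{(p)}$-module. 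This forces $\beta(x)\in\beta(F_p)$, i.e.\ $x\in F_p+D_p$, as required.

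The main obstacle is the Key Lemma in (b)$\Rightarrow$(c); the decisive (and perhaps not obvious) point is that the obstruction quotient $R_p/\bar L$ automatically has finite $p$-rank, which is exactly what upgrades ``separable'' to ``finite''. Once that is secured, the splitting off of $T_p$ (via transitivity of niceness and the fact that bounded subgroups are nice) and of the divisible part are routine, and the converse (c)$\Rightarrow$(a) is a direct $p$-adic closedness computation.
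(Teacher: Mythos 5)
Your argument is essentially the paper's: localize, split off the bounded torsion $T_p$, split off the divisible part, and reduce everything to the statement that a full-rank free submodule of a reduced finite-rank torsion-free $\mathbb Z_{(p)}$-module is nice if, and only if, the module is free (your Key Lemma and your Krull-intersection computation for (c)$\Rightarrow$(a) together supply exactly the two directions the paper compresses into its final sentence). But there is one step whose stated justification is false as a general principle: you assert that $(F_p+T_p)/F_p\cong T_p$ is nice in $G_p/F_p$ \emph{because it is bounded}. Bounded subgroups need not be nice. For a counterexample, let $G=\bigoplus_{n\geq 0}\langle a_n\rangle$ with $a_n$ of order $p^{n+1}$, set $x_n=p^na_n$ (an element of order $p$ and height $n$), and let $N=\langle x_0-x_n: n\geq 1\rangle$. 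Then $pN=0$, yet the coset $x_0+N$ contains $x_n$ (of height exactly $n$) for every $n$ and, as a short computation with the coefficients mod $p$ shows, contains no element of infinite height; so the heights in this coset are finite but unbounded and $N$ is not nice. Finite subgroups are nice; bounded ones are not in general, and your transitivity step leans on the false version.

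The step is true in your context and is easily repaired, but the repair needs an input you did not cite: by Proposition~\ref{quotient}, $G_p/F_p$ is the direct sum of a divisible and a bounded module, so the $p$-heights in $G_p/F_p$ take values in a finite initial segment of $\omega$ together with $\infty$; hence every subset of every coset attains its supremum of heights, i.e.\ \emph{every} subgroup of $G_p/F_p$ is nice, and in particular $(F_p+T_p)/F_p$ is, after which your transitivity argument goes through. (Note that the paper sidesteps this issue entirely by a different maneuver at the same point: instead of pushing $T_p$ across the quotient, it replaces $F$ by the finite-index subgroup $F(n)\subseteq H$, where $G=T\oplus H$ and $p^nT=0$, and finite-index adjustments of nice subgroups are harmless because finite subgroups \emph{are} nice.) With that one citation added, your proof is correct; the remaining verifications you label routine (cancelling the summand $T_p$, passing modulo $D_p$, and the reduction of niceness to the single condition $\bigcap_k(F_p+p^kG_p)=F_p+p^\omega G_p$ at $\omega$, which uses that heights in $G$ are finite or $\infty$) all check out.
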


\begin{proof}
Suppose that $F$ is an arbitrary free quasi-essential subgroup of $G$. It will clearly suffice to show that $F$ is nice if, and only if, $G_p/T_p$, for all primes $p$,  is the direct sum of a free local module and a divisible module. And since $F$ is nice in $G$ if, and only if, $F_p$ is nice in $G_p$ for each prime, it suffice to assume that all groups mentioned are $p$-local.

Also, there is a splitting $G=T\oplus H$, where $H$ is torsion-free of finite rank. If $p^n T=0$, then since $F(n)\subseteq H$ and $F/F(n)$ is finite, $F$ is nice in $G$ if and only if $F(n)$ is nice in $H$. It follows that there is no loss of generality in assuming $H=G$ is torsion-free.

If $G=R\oplus D$, where $R$ is reduced and $D$ is divisible, then  $F(\infty)=F\cap D$ is nice in $D$, since every element of a corresponding coset has height $\infty$.  So $F$ is nice in $G$ if and only if $F_\infty:=F/F(\infty)$ is nice in $G/G(\infty)\cong R$. However, $F(\infty)$ is pure in $F$, and hence a summand, so that $F_\infty$ is free. It therefore follows that $F_\infty$ is nice in $R$ if, and only if, $R/F_\infty$ is finite, i.e., if, and only if, $R$ is free, as required.
\end{proof}

The following assertion allows us to replace a decomposition subgroup of a Warfield group with a general free nice quasi-essential subgroup $F\subseteq G$.

\begin{lemma}\label{tight}
Suppose $F$ is a free finite rank valuated group. Then $F$ has a nice quasi-essential realization $A$ that is a Bassian group if, and only if, for every prime $p$, there is an $n_p\in \omega$ such that $F_p(n_p)$ has a valuated decomposition $X_p\oplus F_p(\infty)$, where for every $x\in X_p\subseteq F_p(n_p)$, $\val x_p$ is $n_p$ plus the $p$-height of $x$ as an element of $X_p$. In fact, if $F$ satisfies this condition for each prime $p$, then we can construct $A$ containing $F$ as a 0-tight subgroup.
\end{lemma}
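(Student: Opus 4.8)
The plan is to argue prime by prime, converting everything into the language of the $p$-Ulm invariants $U_F^p(\alpha)$, and then to assemble the local data into one global group. The organizing observation is that, since $A/F$ is torsion, any realization has torsion-free rank equal to the rank of $F$, so the Bassian condition on $A$ reduces to the single requirement that each $T_p(A)$ be finite. Now, even though $F$ is free, its imposed valuation need not be gap-free, and these gaps are exactly what is recorded by $U_F^p(\alpha)\ne 0$; in any honest realization the heights of the torsion-free part are automatically gap-free, so the gaps of $F$ must be absorbed by manufacturing torsion (this is precisely the mechanism of Lemma~\ref{realize}). Thus $T_p(A)$ is finite if and only if $F$ has only finitely many gaps at $p$, and the hypothesis that $F_p(n_p)=X_p\oplus F_p(\infty)$ with $X_p$ standard says exactly that every gap at $p$ sits below the level $n_p$.

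For necessity, let $A$ be a nice quasi-essential Bassian realization and fix $p$. Since $T_p(A)$ is finite, the torsion it measures forces $U_A^p(\alpha)=0$ for all $\alpha$ beyond some $n_p<\w$. The realization supplies a natural injection $U_F^p(\alpha)\to U_A^p(\alpha)$, so $U_F^p(\alpha)=0$ for every $\alpha\ge n_p$. The point is that this one vanishing controls both kinds of bad behaviour above $n_p$: a finite gap at level $\alpha$ (an $x$ with $\val x_p=\alpha$ and $\val{px}_p\ge\alpha+2$) and a finite-to-infinite jump (an $x$ with $\val x_p=\alpha$ finite and $\val{px}_p=\infty$) both contribute a nonzero class to $U_F^p(\alpha)$. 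Hence above $n_p$ every finite-value element $x$ satisfies $\val{px}_p=\val x_p+1$, and $F_p(\infty)$ collects the remaining (infinite-value) elements. Finally $F_p(\infty)=\bigcap_\alpha F_p(\alpha)$ is a pure, hence free and complemented, $\mathbb Z_{(p)}$-submodule of the finite-rank free module $F_p(n_p)$; lifting a basis of the gap-free quotient $F_p(n_p)/F_p(\infty)$ to elements of minimal value $n_p$ produces a complement $X_p$ on which $\val x_p$ is $n_p$ plus the $p$-height of $x$ in $X_p$.

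For sufficiency, together with the $0$-tightness, I would build the realization explicitly, modeled on Lemma~\ref{realize}, treating the three pieces of $F_p(n_p)=X_p\oplus F_p(\infty)$ (and the finitely many levels below $n_p$) separately. For a value-$n_p$ basis element $e$ of $X_p$, adjoining a single root $f$ with $p^{n_p}f=e$ raises the height of $e$ to $n_p$ and keeps the value sequence standard without creating torsion; the elements of $F_p(\infty)$ are made $p$-divisible, contributing a copy of $\mathbb Z_{p^\infty}$ to the quotient but, being torsion-free, no torsion to $A$; and each of the finitely many gaps of $F_p$ below $n_p$ is filled by a finite chain exactly as in Lemma~\ref{realize}, contributing only finite torsion. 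Since there are finitely many such gaps per prime, each $T_p(A_p)$ is finite. Assembling these into a global $A\supseteq F$ is routine: adjoining $p$-local data leaves the $q$-localizations fixed for $q\ne p$, so $A_p$ recovers each local construction, $A/F=\bigoplus_p(A_p/F_p)$ is torsion, $F$ is nice in $A$ (niceness being checkable locally), and $T_p(A)=T_p(A_p)$ is finite for every $p$, whence $A$ is Bassian. The construction is arranged to introduce no level-$0$ torsion beyond what the gaps of $F$ at level $0$ already demand, so $U_F^p(0)\to U_A^p(0)$ stays an isomorphism and $F$ is $0$-tight.

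The step I expect to be the main obstacle is the precise dictionary between the abstract valuation of $F$ and the torsion of its realization. In the necessity direction the delicate point is producing a complement $X_p$ whose valuation is genuinely standard rather than merely a module complement, which is where the vanishing of $U_F^p$ above $n_p$ must be used through the lift of minimal-value basis elements. In the sufficiency direction the real work is the simultaneous bookkeeping across all (possibly infinitely many) primes: keeping $F$ nice, keeping each $T_p(A)$ finite, and introducing no spurious level-$0$ invariant. The key structural feature that makes the stated condition exactly right is that splitting off $F_p(\infty)$ forbids finite-to-infinite jumps above $n_p$, and it is precisely such jumps that would otherwise manufacture extra torsion, as the infinite-gap case of Lemma~\ref{realize} shows.
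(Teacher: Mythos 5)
Your necessity argument has a genuine gap: you never use the hypothesis that $F$ is \emph{nice} in $A$, and the statement you actually prove --- that finiteness of $T_p(A)$, i.e.\ vanishing of $U_F^p(\alpha)$ for $\alpha\geq n_p$, already forces the standard decomposition $F_p(n_p)=X_p\oplus F_p(\infty)$ --- is false. Gap-freeness above $n_p$ does give the module splitting (your purity argument for $F_p(\infty)$ is fine), but it does not make any complement standard, and ``lifting a basis to elements of minimal value $n_p$'' does not help: standardness is the filtration condition $F_p(n_p+k)=p^kX_p\oplus F_p(\infty)$ for all $k$, which the values of individual basis vectors do not control. Concretely, let $\pi\in J_p$ be a unit not in $\Z_{(p)}$, let $F$ be the rank-$2$ free group $\Z\cdot 1\oplus\Z\cdot\pi\subseteq J_p$ with $\val{x}_p$ the $p$-adic valuation (and $\val{x}_q=\infty$ for $q\neq p$), and let $A=\Q F\cap J_p$ be the pure closure of $F$ in $J_p$. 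Then $A$ is a torsion-free finite-rank (hence Bassian) quasi-essential realization of $F$, and the valuation on $F$ is gap-free \emph{everywhere} ($U_F^p(\alpha)=0$ for all $\alpha$, $F_p(\infty)=0$); yet no $F_p(n)$ is ever standard, because purity in $J_p$ gives $[F_p(n):F_p(n+1)]=p$ while $[F_p(n):pF_p(n)]=p^2$, so $F_p(n+1)\neq pF_p(n)$; equivalently, if $e_1,e_2$ is any basis of $F_p(n)$ consisting of elements of value $n$, some unimodular combination $e_1-ce_2$ lies in $F_p(n+1)$ and so has value $>n$. Since your argument uses nothing about $A$ beyond finiteness of its torsion, it would apply verbatim to this $A$ and reach a false conclusion. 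The missing ingredient is exactly what the paper uses: by Proposition~\ref{nice}, niceness of $F$ forces $A_p/T_p$ to be free-plus-divisible, so one may write $A_p=T_p\oplus R_p\oplus D$ with $R_p$ free, choose $n_p$ with $p^{n_p}T_p=0$ and $p^{n_p}R_p\subseteq F_p$, and take $X_p=p^{n_p}R_p$; the standard valuation on $X_p$ comes from the free summand $R_p$, not from Ulm invariants. (Consistently, in the counterexample $A_p$ is a reduced rank-$2$ pure submodule of $J_p$ with $p$-rank $1$, so it is not free-plus-divisible and $F$ is not nice in $A$ --- and by the lemma no nice quasi-essential Bassian realization of this $F$ exists.)

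Your sufficiency direction is, in outline, the paper's actual construction (roots $p^{n_p}y=b$ over a basis of $X_p$, divisibility chains over a basis of $F_p(\infty)$, finite chains filling the finitely many levels below $n_p$, assembled prime by prime), so the plan there is sound; but two points you pass over are where the real work sits. First, for rank $>1$ the gaps below $n_p$ cannot be filled ``strand-by-strand as in Lemma~\ref{realize}'': the paper attaches generators to bases of the quotients $F_p(m)/F_p(m+1)$ for $2\leq m\leq n_p-1$ and then proves the realization property $F_p(\alpha)=p^\alpha A_p\cap F_p$ by a reverse induction down the levels. Second, $0$-tightness is not automatic from ``introducing no spurious level-$0$ torsion'': it depends on the specific choice at level $1$, where one must use a basis of $F_p(1)/(pF_p+F_p(2))$ rather than of $F_p(1)/F_p(2)$, and then verify by the explicit computation with the level-$1$ generators that every class in $U_A^p(0)$ comes from $F$. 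These are repairable details; the necessity gap above is not repairable without invoking niceness.
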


\begin{proof} If we are given the nice realization $A$, one argues as in Proposition~\ref{nice} that each $F_p$ has the required decomposition (or see the proof of Theorem~\ref{nice2}). So, assume we are given $F$ so that such decompositions always exist. There is clearly no loss of generality in assuming that for each prime $p$, $n_p\geq 2$.

For each such prime, let $b_{p,\infty,j}\in F$, for $j=1,\dots, d_{p,\infty}$, be a $\mathbb Z_{(p)}$-basis for the free module $F_p(\infty)$. Define a set of generators $y_{p,\infty, j, i}$ ($i<\omega$) subject to the relations $y_{p,\infty, j,0}=b_{p,\infty,j}$ and $py_{p,\infty, j, i+1}=y_{p,\infty, j,i }$.

Similarly, let $b_{p,n_p,j}\in F$, for $j=1,\dots, d_{p,n_p}$, be a $\mathbb Z_{(p)}$-basis for $X_p$. Define a set of generators $y_{p,n_p, j}$ subject to the relations $p^{n_p} y_{p,n_p, j}=b_{p,n_p,j}$.

Next, for $m=2, \dots n_p-1$, let $b_{p,m,j}\in F$, for $j=1,\dots, d_{p,m}$, project onto a basis for $F_p(m)/F_p(m+1)$. Define a set of generators $y_{p,m, j}$ subject to the relations $p^m y_{p,m, j}=b_{p,m,j}$.

Finally, let $b_{p,1,j}\in F$, for $j=1,\dots, d_{p,1}$, project onto a basis for $$F_p(1)/(pF_p+F_p(2)).$$ Define a set of generators $y_{p,1, j}$ subject to the relations $py_{p,1, j}=b_{p,1,j}$.

We let $A$ be the group generated by $F$ together with all the $y$s subject to the above relations. Note that for all primes $p$, $A_p$ is the $p$-local module generated by $F_p$ and all of the above generators of the form $y_{p, \dots}$. We want to show that $A$ is a realization of $F$, i.e., for all primes $p$ and all $\alpha\in \omega\cup \{\infty\}$, we have $F_p(\alpha)=p^\alpha A_p\cap F_p$.

If $U_p$ is the submodule of $A_p$ generated by $F_p(\infty)$ together with $y_{p,\infty,j,i}$ for all $j\leq d_{p,\infty}$, $i<\omega$, then it is easy to see that $U_p$ is divisible, torsion-free and $F_p(\infty)=F_p\cap U_p$. In fact, $U_p$ will be a divisible hull of $F_p(\infty)$ and hence a summand of any containing module.

Let $W\subseteq A$ be the subgroup generated by $F$ and all possible $y_{p,\infty, j, i}$. Setting $\val {y_{p,\infty,j,i}}_p=\infty$ and $\val {y_{p,\infty,j,i}}_q=\val {b_{p,\infty, j}}_q$ for all $j\leq d_{p,\infty}$, $i<\omega$ and $q\ne p$ extends the valuation on $F$ to one on $W$ with $W_p(\infty)=p^\infty W_p=U_p$. There is clearly a valuated decomposition $W_p(n_p)=X_p\oplus U_p$. It will suffice to show that $A$ is a realization of $W$; i.e., each $A_p$ is a realization of $W_p$.

\medskip

Note that for $m\leq n_p$, and $j\leq  d_{p,m}$,  if $p^s y_{p,m, j}\in W_p$, then $s\geq m$ and $\val {p^s y_{p,m, j}}_{W_p}\geq s$. It follows that $p^s A_p\cap W_p\subseteq W_p(s)$ for all $s<\omega$.

We  show by a reverse induction that $W_p(s)\subseteq p^s A_p\cap W_p$; i.e., $W_p(s)\subseteq p^s A_p$. We first verify this for $s=n_p$.
In the decomposition $W_p(n_p)=X_p\oplus U_p$, since the $b_{p,n_p, j}$ map on to a $\mathbb Z_{(p)}$-basis for $X_p$ it follows that
$$W_p(n_p)= \langle b_{p,n_p, j} \rangle+U_p= \langle p^{n_p}y_{p,n_p, j} \rangle+p^{n_p}U_p\subseteq p^{n_p} A.$$
So, if $s=n_p+k\geq n_p$, then
$
              W_p(s)=p^kW_p(n_p) \subseteq p^k p^{n_k} A=p^s A,
$
as required.

Suppose now that $1<s<n_p$ and this holds for $s+1$. Then
$$
                  W_p(s)=\langle b_{p,s,j}\rangle +W_p(s+1)\subseteq p^sA_p+p^{s+1}A_p=p^s A_p.
$$
Finally, if $s=1$ and it has been verified for $s=2$, then
$$
                  W_p(1)=\langle b_{p,1,j}\rangle +pW_p+W_p(2)\subseteq pA_p+pW_p+p^{2}A_p=pA_p.
$$

Therefore, $A$ is a realization of $W$ and $F$.

\medskip

Note that the quotient $A/W$ is a torsion group. If $T$ is the torsion subgroup of $A$ and $p$ is a prime, then $T_p\cap W_p=0$. So $T_p$ embeds in $A_p/W_p$ which is clearly finite. Therefore, each $T_p$ is finite, so that $A$ is Bassian.

\medskip

We next verify that $V$ is nice in $A$. To that end, let $p$ some prime. Clearly, $V_p$ is nice in $W_p$, since every coset has an element of infinite value. And $W_p$ is nice in $A_p$, since the corresponding quotient is finite. Therefore, each $V_p$ is nice in $A_p$, as required.

\medskip

We now claim that $F$ is $0$-tight in $A$. To that end, suppose $p$ is a prime and $x\in A$ represents an element of $U_0(A)$; i.e., $\val x_A=0$ and $\val {px}_A\geq 2$; we need to show that $x\in F+pA$.

Since $F$ and $F_p$, as well as $A$ and $A_p$ have isomorphic Ulm invariants, there is no loss of generality in assuming $F$ and $A$ are local.
So, we may assume that
$x$ is the sum of an element of $F$ plus terms of the form $sy_{p,m,j}$ and $ty_{p,\infty, j,i }$ where such $s, t$ are integers. Since each $y_{p,\infty, j,i }\in pA$, modulo $pA$, we may again ignore the last type of term. Similarly, whenever $m>1$, since $\val {px}_{A}\geq 2$, we can conclude $p$ divides $s$, so again modulo $pA$ we may ignore terms of the form $sy_{p,m,j}$ for $m>1$, as well. So we may assume $x$ is of form
$$
          x=   s_1 y_{p,1,1}+\cdots +s_{n_1} y_{p,1,n_1} +v \ \ \ (v\in F).
$$
Since
$$
           px = s_1 b_{p,1,1}+\cdots +s_{n_1} b_{p,1,d_{p,1}} +pv\in F
$$
has height at least $2$, $px-pv= s_1 b_{p,1,1}+\cdots s_{n_1} b_{p,1,d_{p,1}}$ represents the zero element of $F_p(1)/(pF_p+F_p(2))$.  Since these $b$s were chosen to project to a basis of this vector space, we can conclude that $p$ divides each $s$. Therefore, each $s_i y_{p,1,i}$ is in $\langle b_{p,1,i}\rangle \subseteq F$, so that $x\in F$, as required.
\end{proof}

The following result clearly generalizes the fact that any Warfield B+E-group is necessarily generalized Bassian. It allows us to replace a decomposition subgroup by any nice free quasi-essential subgroup.

\begin{theorem}\label{nice2}
Suppose $G$ is a group with a free nice quasi-essential subgroup $F$. Then $G$ is generalized Bassian if, and only if, it is a {\rm{B+E}}-group.
\end{theorem}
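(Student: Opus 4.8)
The plan is to prove the nontrivial (reverse) implication, since the forward one---that a generalized Bassian group is $\mathrm{B+E}$---is precisely Corollary~\ref{oneway}. So assume $G$ is a $\mathrm{B+E}$-group possessing a free nice quasi-essential subgroup $F$. By Proposition~\ref{step}, $G$ has finite torsion-free rank and bounded $p$-torsion (indeed each $pT_p$ is finite), so $F$ is free of finite rank and we are squarely in the setting of Proposition~\ref{nice}. The target is a decomposition $G\cong A\oplus S$ with $S$ torsion and $A$ admitting a $0$-tight subgroup; once this is secured, Corollary~\ref{second} immediately yields that $G$ is generalized Bassian. This is the key reduction, and it explains why merely being $\mathrm{B+E}$ is not enough in general: a Bassian group may carry an infinite elementary summand spread across infinitely many primes, and the role of $F$ is exactly to rule this out (via the $0$-tightness and Corollary~\ref{suffer}).

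First I would extract the local structure. Since $F$ is nice in $G$, Proposition~\ref{nice} gives, for every prime $p$, that $G_p/T_p$ is the direct sum of a free local module and a divisible module. Localizing, and using that $T_p$ is a bounded summand, I would then verify that the valuated group $F$ satisfies the hypothesis of Lemma~\ref{tight}: for each $p$ one chooses $n_p$ exceeding a bound for $T_p$ (and at least $2$), so that above level $n_p$ the valuation on $F_p$ is governed entirely by the heights inherited from the reduced-plus-divisible module $G_p/T_p$; this furnishes the required valuated splitting $F_p(n_p)=X_p\oplus F_p(\infty)$ with the height-matching property. This step simultaneously supplies the ``only if'' half of Lemma~\ref{tight} that was deferred there. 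Applying the ``if'' direction of Lemma~\ref{tight}, I obtain a Bassian realization $A$ containing $F$ as a $0$-tight, nice, quasi-essential subgroup.

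Next I would glue $A$ back onto $G$, imitating the proof of Proposition~\ref{flag}. Both $G$ and $A$ are nice realizations of the same finite-rank valuated group $F$, and by Proposition~\ref{quotient} both $G/F$ and $A/F$ are totally projective; hence so is $(A\oplus S)/F\cong (A/F)\oplus S$ for any torsion direct sum of cyclics $S$. I would choose $S$ so that its Ulm invariants absorb exactly the discrepancy between the relative Ulm invariants of $F$ in $G$ and those of $F$ in $A$; at level $0$ this discrepancy is precisely the (possibly infinite, elementary) relative invariant of $F$ in $G$, since $0$-tightness forces the relative invariant of $F$ in $A$ to vanish there. With $S$ so chosen, the relative Ulm invariants of $F$ in $G$ and in $A\oplus S$ coincide, and the relative uniqueness theorem for nice realizations with totally projective quotient lets the identity on $F$ extend to an isomorphism $G\cong A\oplus S$, as wanted.

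Two points will demand the most care. The first is the valuated-decomposition verification: translating the reduced-plus-divisible description of $G_p/T_p$ into the exact height-matching splitting $F_p(n_p)=X_p\oplus F_p(\infty)$ required by Lemma~\ref{tight}. The second, and the genuine conceptual obstacle, is the gluing step: one must confirm that the relative invariants of $F$ in $A$ are dominated at every finite level by those of $F$ in $G$ (so that a legitimate nonnegative Ulm function for $S$ exists), check that the infinite-height/divisible parts of $A$ and $G$ already agree (both being divisible hulls of $F(\infty)$), and then invoke the uniqueness theorem in its \emph{relative} form. This relative classification is the nonobvious input replacing the classical Warfield theory used in Proposition~\ref{flag}, and it is precisely what allows the conclusion to hold well beyond the Warfield case.
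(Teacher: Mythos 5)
Your reduction and the first half of your argument coincide with the paper's proof: the forward direction is Corollary~\ref{oneway}, and the paper likewise uses Proposition~\ref{quotient} and Proposition~\ref{nice} (writing $G_p=T_p\oplus R_p\oplus D$ and taking $n_p\geq m$ with $X_p:=p^{n_p}R_p\subseteq F_p$) to verify the hypothesis of Lemma~\ref{tight} and obtain a Bassian, nice, $0$-tight realization $A$ of $F$, then finishes through Corollary~\ref{second}. The genuine gap is in your gluing step. You aim for $G\cong A\oplus S$ with $S$ torsion, which requires the relative Ulm invariants of $F$ in $A$ to be dominated at every finite level by those of $F$ in $G$, i.e., $f_A^p(j)\leq f_G^p(j)$ for all $j$. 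You flag this as a point to ``confirm,'' but it is false in general: $A$ is only $0$-tight, and the generators adjoined in Lemma~\ref{tight} inflate the invariants at levels $j\geq 1$. Concretely, take $G=\mathbb{Z}_{(p)}$ and $F=\langle a\rangle$ with $a=p\cdot 1$, so $a$ has value sequence $(1,2,3,\dots)$ at $p$ and $G/F$ is torsion. The construction of Lemma~\ref{tight} (with $n_p=2$, $X_p=\langle pa\rangle$) adjoins $u$ with $pu=a$ and $y$ with $p^2y=pa=p^2u$, so $t:=y-u$ has order $p^2$ and $A_p\cong \mathbb{Z}_{(p)}\oplus \langle t\rangle$; hence $f_A^p(1)=1>0=f_G^p(1)$. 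No nonnegative Ulm function for $S$ exists, and indeed $G\cong A\oplus S$ is outright impossible here, since $G$ is torsion-free while $A$ is not. Nor can you simply choose a better $A$: Lemma~\ref{tight} guarantees only $0$-tightness, and a tight realization of a free valuated group need not exist (tightness is available only in the cyclic case of Lemma~\ref{realize}, which is why the paper proves Corollary~\ref{suffer} for $0$-tight subgroups).

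The paper's proof repairs exactly this by symmetrizing rather than dominating: letting $T^1$ be a copy of the torsion of $A$ and $T^2$ a copy of the torsion of $G$, it extends the identity on $F$ to an isomorphism $G\oplus T^1\cong A\oplus T^2$. Since bounded $p$-torsion splits off, every group in play satisfies $f_X^p(\alpha)=f_{T(X)}^p(\alpha)$, so both sides have literally identical Ulm invariants, and the relative invariants of $F$ on both sides equal $f^p_{T^1\oplus T^2}(\alpha)-f^p_F(\alpha)$ at finite $\alpha$ and $0$ otherwise; the relative uniqueness theorem you invoke then applies with no domination hypothesis at all. Corollary~\ref{second} shows $A\oplus T^2$ is generalized Bassian, and $G$ inherits the property as a summand: if $N\subseteq G$ and $G$ embeds in $G/N$, then $G\oplus T^1$ embeds in $(G\oplus T^1)/N$, so $N$ is a summand of $G\oplus T^1$ and hence of $G$ by modularity. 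Your outline becomes correct once your target $G\cong A\oplus S$ is replaced by this two-sided padding $G\oplus T(A)\cong A\oplus T(G)$.
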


\begin{proof} One direction being immediate, suppose $G$ is a B+E-group. Note that by Proposition~\ref{quotient}, $G/F$ is totally projective, i.e., for each prime $(G/F)_p\cong G_p/F_p$ is the direct sum of a divisible and a bounded module.

We claim that $F$ (with the induced valuations) satisfies Lemma~\ref{tight}, so assume $p$ is some prime; we need to find $n_p$ and a decomposition $F_p=X_p\oplus F_p(\infty)$ as in that result.

Owing to Proposition~\ref{nice}, there is a decomposition $$G_p=T_p\oplus R_p\oplus D,$$ where $R_p$ is a free module and $D$ is divisible. If $p^{m} T_p=0$, then $F_p(m)\subseteq R_p\oplus D$. Find $n_p\geq m$ such that $X_p:=p^{n_p} R_p\subseteq F_p$. Since $$p^{n_p} (R_p\oplus D)=(p^{n_p} R_p) \oplus D,$$ we can conclude that  $F_p(n_p)=X_p\oplus F_p(\infty)$, and our decomposition behaves as required.

So, in view of Lemma~\ref{tight}, we can find a Bassian group $A$ that is a nice $0$-tight realization of $F$. Let $T^1$ be a second copy of the torsion subgroup of $A$ and $T^2$ be a second copy of the torsion subgroup of $G$. Since the Ulm invariants of $G\oplus T^1$ agree with those of $A\oplus T^2$, and the Ulm invariants of the valuated group $F$ are all finite, it follows that the relative Ulm invariants of $F$ in $G\oplus T^1$ agree with those of $F$ in $A\oplus T^2$. [Notice that, for a given prime $p$, both relative Ulm invariants will equal $f^p_{T^1\oplus T^2}(\alpha)-f^p_F(\alpha)$ when $\alpha$ is finite and $0$ otherwise.]

Therefore, the identity isometry $F\cong F$ extends to an isomorphism $G\oplus T^1\cong A\oplus T^2$.  It follows from Corollary~\ref{second} that $A\oplus T^2$ is generalized Bassian. Consequently, $G$, as a summand of a generalized Bassian group, will also be a generalized Bassian group, as required.
\end{proof}

As a valuable consequence, we derive the following.

\begin{corollary}\label{divisible}
Suppose $G$ is a group such that $G/T$ is divisible. Then $G$ is generalized Bassian if, and only if, it is a {\rm{B+E}}-group.
\end{corollary}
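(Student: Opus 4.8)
The plan is to reduce the statement to Theorem~\ref{nice2} by producing a free nice quasi-essential subgroup of $G$. One implication is immediate: every generalized Bassian group is B+E by Corollary~\ref{oneway}, so only the converse requires argument.

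So suppose $G$ is a B+E-group. First I would record, via Proposition~\ref{step}, that $G$ has finite torsion-free rank $n:=r_0(G)$ and that each $pT_p$ is finite; since finiteness of $pT_p$ forces $T_p$ to be bounded, $G$ has bounded $p$-torsion. Next I would choose a maximal independent set $x_1,\dots,x_n$ of elements of infinite order and put $F=\langle x_1,\dots,x_n\rangle$, so that $F$ is free of rank $n$ and $G/F$ is torsion; thus $F$ is a free quasi-essential subgroup of $G$.

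The key step is to check that $F$ is nice, and for this I would appeal to Proposition~\ref{nice}. Because $G/T$ is divisible, each localization $G_p/T_p\cong (G/T)_p$ is again divisible, so it is trivially the direct sum of a (zero) free local module and a divisible module---exactly condition (c) of Proposition~\ref{nice}. That proposition then yields that $F$ is nice in $G$.

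Having secured a free nice quasi-essential subgroup, I would finish by invoking Theorem~\ref{nice2}, which directly gives that the B+E-group $G$ is generalized Bassian. I expect the only delicate point to be the localization step: one must verify that the divisibility of $G/T$ is inherited by each $G_p/T_p$ and then recognize this as the degenerate case of Proposition~\ref{nice}(c) with trivial free part. Everything else is a routine bookkeeping application of results already developed above.
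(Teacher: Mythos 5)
Your proposal is correct and follows essentially the same route as the paper: both verify condition (c) of Proposition~\ref{nice} (which holds trivially here, since $G_p/T_p\cong (G/T)_p$ is divisible when $G/T$ is) to obtain a free nice quasi-essential subgroup, and then invoke Theorem~\ref{nice2}. Your write-up merely makes explicit a few steps the paper leaves implicit, such as the construction of $F$ from a maximal independent set and the localization of divisibility.
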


\begin{proof} Again, the necessity is clear. On the other hand, if $G$ is B+E, then according to Proposition~\ref{nice}, it has a free nice quasi-essential subgroup. So, the result follows at once from Theorem~\ref{nice2}, as desired.
\end{proof}

Of the groups mentioned in Section~\ref{A}, we have already noted that Conjecture~\ref{conj} holds for Warfield (and balanced-projective) group. It follows directly from Corollary~\ref{divisible} that it also holds for the other class mentioned there.

\begin{corollary} Suppose $G$ is a {\rm{PSP}}-group. Then $G$ is generalized Bassian if, and only if, it is a {\rm{B+E}}-group.
\end{corollary}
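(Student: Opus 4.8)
The plan is to reduce the whole statement to Corollary~\ref{divisible}. One implication, that a generalized Bassian group is B+E, is immediate from Corollary~\ref{oneway} and needs no use of the PSP hypothesis. For the converse I would show that for \emph{every} PSP-group $G$ the quotient $G/T$ is (torsion-free) divisible; once this is in hand, Corollary~\ref{divisible} applies verbatim and yields that $G$ is generalized Bassian whenever it is B+E. In particular, the finite-rank hypothesis and the structural analysis of $G_p/T_p$ that are needed inside Corollary~\ref{divisible} are already handled there, so the only point I must verify is the divisibility of $G/T$.

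The heart of the argument is therefore a single observation. By definition $G$ sits as a \emph{pure} subgroup of $P=\prod_p T_p$ with $T\subseteq G$, and we already noted that $P/T$ is torsion-free divisible. The key step would be to transfer purity of $G$ in $P$ to purity of $G/T$ in $P/T$. Since $T\subseteq G$, for every integer $n>0$ the modular law together with the purity equality $G\cap nP=nG$ gives $G\cap(nP+T)=(G\cap nP)+T=nG+T$; reading off the intersection inside the quotient then yields $(G/T)\cap n(P/T)=(nG+T)/T=n(G/T)$, so $G/T$ is pure in $P/T$. Finally, a pure subgroup $H$ of a divisible group $D$ is itself divisible, since $nH=H\cap nD=H\cap D=H$ for all $n>0$; applying this to $H=G/T$ inside the divisible group $P/T$ shows that $G/T$ is divisible, which completes the key step.

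With $G/T$ divisible established, the conclusion is immediate from Corollary~\ref{divisible}: $G$ is generalized Bassian if, and only if, it is B+E. I do not anticipate a genuine obstacle; the only place that requires care is the purity-descent computation, and specifically the use of the containment $T\subseteq G$ to invoke modularity so that purity passes from $G\subseteq P$ to $G/T\subseteq P/T$. Everything else is a direct citation of earlier results.
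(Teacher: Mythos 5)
Your proposal is correct and matches the paper's route exactly: the paper disposes of this corollary by remarking that it ``follows directly from Corollary~\ref{divisible},'' the implicit point being precisely that $G/T$ is divisible for any PSP-group. Your purity-descent computation (using the modular law with $T\subseteq G$ to pass purity from $G\subseteq P$ to $G/T\subseteq P/T$, then noting a pure subgroup of the divisible group $P/T$ is divisible) is a sound and welcome filling-in of the detail the paper leaves unstated.
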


We have already mentioned that the class of B+E-groups is closed under arbitrary subgroups. The following makes this more explicit and demonstrates an even closer connection between those groups and the generalized Bassian groups.

\begin{corollary}\label{embeds}
A group $G$ is {\rm{B+E}} if, and only if, it embeds in a group that is generalized Bassian.
\end{corollary}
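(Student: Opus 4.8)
The plan is to derive both implications directly from results already established, with the forward direction resting on the construction of a suitable overgroup having divisible torsion-free quotient.

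The backward implication is immediate. Suppose $G$ embeds in a generalized Bassian group $H$. By Corollary~\ref{oneway}, $H$ is a B+E-group, and by Corollary~\ref{subgroup} every subgroup of a B+E-group is again B+E. Hence $G$ is B+E.

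For the forward implication, suppose $G$ is B+E. By Proposition~\ref{step}, $r_0(G)$ is finite and $pT_p$ is finite for every prime $p$. I would enlarge $G$ to a group $G'$ with the same torsion subgroup $T$ but with $G'/T$ divisible. Concretely, let $D$ be a divisible hull of $G/T$; since $r_0(G)$ is finite, $D$ is a finite direct sum of copies of $\mathbb Q$. The inclusion $G/T\hookrightarrow D$ induces a surjection $\Ext(D,T)\to \Ext(G/T,T)$ (the next term in the associated long exact sequence is an $\Ext^2$, which vanishes over $\mathbb Z$). Choosing a preimage of the class of the extension $0\to T\to G\to G/T\to 0$ produces a group $G'$ fitting in a sequence $0\to T\to G'\to D\to 0$ and containing $G$ as the pullback along $G/T\hookrightarrow D$. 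Since $G'/T\cong D$ is torsion-free, $T$ is exactly the torsion subgroup of $G'$; thus $r_0(G')=r_0(D)=r_0(G)$ is finite and the groups $pT_p$ are unchanged, hence still finite. By Proposition~\ref{step}, $G'$ is again B+E, and as $G'/T$ is divisible, Corollary~\ref{divisible} shows that $G'$ is generalized Bassian. Therefore $G$ embeds in the generalized Bassian group $G'$, as required.

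There is no serious obstacle here: the construction of $G'$ is precisely the device used at the start of the proof of Theorem~\ref{decompose}, and the only point needing a moment's care is verifying that passing from $G$ to $G'$ leaves the torsion subgroup (and hence both finiteness conditions of Proposition~\ref{step}) intact, which is immediate from the torsion-freeness of $G'/T\cong D$. Once that is checked, Corollary~\ref{divisible} supplies the essential content.
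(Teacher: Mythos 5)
Your proof is correct and follows essentially the same route as the paper: the backward direction via Corollaries~\ref{oneway} and~\ref{subgroup}, and the forward direction by using the surjectivity of $\mathrm{Ext}(D,T)\to \mathrm{Ext}(G/T,T)$ to embed $G$ in a B+E-group $G'$ with $G'/T\cong D$ divisible, then invoking Corollary~\ref{divisible}. Your added verifications (that $T$ remains the torsion subgroup of $G'$ and that both conditions of Proposition~\ref{step} persist) are exactly the details the paper leaves implicit in the word ``clearly.''
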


\begin{proof}
We know that any generalized Bassian group is B+E and that a subgroup of a B+E-group retains that property, so that sufficiency is clear.

Conversely, if $G$ is a B+E-group, and $D$ is a (torsion-free finite rank) divisible hull of $G/T$, then the surjectivity of the map $${\rm Ext}(D, T)\to {\rm Ext}(G/T,T)$$ implies that there is a group $G'$ containing $G$ (and hence $T$) such that $G'/T\cong D$ is divisible. Clearly, $G'$ will be a B+E-group. It follows from Corollary~\ref{divisible} that $G'$ is generalized Bassian, concluding the proof.
\end{proof}

Let us recall now that a group $G$ is {\it Hopfian} if, for any (possibly proper) subgroup $H$ of $G$, whenever $G\cong G/H$, it must be that $H=\{0\}$. It is principally known that all Bassian groups are Hopfian (see, e.g., \cite{CDG1}), which is {\it not} the case for generalized bassian groups, however (see, e.g., \cite{CDG2}). Nevertheless, the following necessary and sufficient condition is true, which proof is not so hard and follows from our results alluded to above, so we will omit the details in its proving.

\begin{proposition} A {\rm{B+E}}-group is Bassian if, and only if, it is Hopfian. In particular, the same holds for generalized Bassian groups.
\end{proposition}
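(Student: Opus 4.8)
The plan is to prove the two implications separately. The direction ``Bassian $\Rightarrow$ Hopfian'' is already available in the literature (it is recalled just before the statement, see also \cite{CDG1}), so no work is needed there. Everything therefore reduces to showing that a {\rm B+E}-group which is Hopfian must be Bassian, and I would establish this by proving the contrapositive: \emph{a {\rm B+E}-group that fails to be Bassian also fails to be Hopfian.}

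So suppose $G$ is {\rm B+E} but not Bassian, and fix a decomposition $G=A\oplus E$ with $A$ Bassian and $E$ elementary. Recall that $G$ is Bassian if, and only if, $r_0(G)$ is finite and each $T_p$ is finite. By Proposition~\ref{step} the group $G$ already has finite torsion-free rank, so the failure of the Bassian property must come from some $T_p$ being infinite. Since $A$ is Bassian, $T_{A,p}$ is finite; hence the infinitude of $T_p$ forces the corresponding $p$-component of the elementary summand to be infinite, say $E_p=\bigoplus_{i\in I}\langle e_i\rangle$ with each $e_i$ of order $p$ and $I$ infinite.

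Now fix any index $i_0\in I$ and set $H=\langle e_{i_0}\rangle$, a nonzero subgroup of $G$. Because $e_{i_0}$ lies in a $\mathbb{Z}_p$-summand of $E$, we have $G/H\cong A\oplus(E/H)$. Passing one summand down in $E_p=\bigoplus_{i\in I}\langle e_i\rangle$ gives $E_p/H\cong\bigoplus_{i\in I\setminus\{i_0\}}\langle e_i\rangle$, and since $I$ is infinite we have $|I\setminus\{i_0\}|=|I|$, whence $E_p/H\cong E_p$. This ``shift'' isomorphism yields $E/H\cong E$ and therefore $G/H\cong A\oplus E=G$ with $H\neq\{0\}$. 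Thus $G$ is not Hopfian, which completes the contrapositive and establishes ``Hopfian $\Rightarrow$ Bassian'' for {\rm B+E}-groups. The ``in particular'' clause is then immediate: by Corollary~\ref{oneway} every generalized Bassian group is {\rm B+E}, so the equivalence applies to it verbatim.

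The only point requiring any care is the construction of the shift isomorphism $E_p/\langle e_{i_0}\rangle\cong E_p$ and the verification that it lifts through the decomposition to give $G/H\cong G$; once the infinite elementary summand has been located via the Bassian/{\rm B+E} characterizations, this is routine, so I expect no genuine obstacle beyond bookkeeping.
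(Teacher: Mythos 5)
Your proof is correct, and it fills in exactly the argument the paper alludes to when it omits the details: the known implication ``Bassian $\Rightarrow$ Hopfian'' from \cite{CDG1}, plus the contrapositive of the converse, where a non-Bassian {\rm B+E}-group must (by Proposition~\ref{step} and the Main Theorem of \cite{CDG1}) contain an infinite elementary $p$-summand $E_p$, and killing one generator of $E_p$ yields $G/H\cong G$ with $H\neq\{0\}$, so $G$ is not Hopfian. The ``in particular'' clause via Corollary~\ref{oneway} is also handled as intended, so there is nothing to correct.
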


\medskip
\medskip

\noindent {\bf Funding:} The work of the first-named author, P.V. Danchev, is partially supported by the Bulgarian National Science Fund under Grant KP-06 No. 32/1 of December 07, 2019 as well as by the Junta de Andaluc\'ia, Grant FQM 264, and by the BIDEB 2221 of T\"UB\'ITAK.

\vskip2.0pc

\end{document}